%
%
%
\documentclass{amsart}

\newtheorem{theorem}{Theorem}[section]
\newtheorem{lemma}[theorem]{Lemma}

\theoremstyle{definition}
\newtheorem{definition}[theorem]{Definition}
\newtheorem{example}[theorem]{Example}

\theoremstyle{remark}

\numberwithin{equation}{section}



\begin{document}

\title[] {Some new properties of Confluent Hypergeometric Functions}

\author{Xu-Dan LUO }
\address{Department of Mathematics, Hong Kong University of
Science and Technology, Clear Water Bay, Kowloon, Hong Kong,\ P. R. China}
\email{xluoab@ust.hk}
\thanks{}

\author{Wei-Chuan LIN}
\address{Department of Mathematics, Fujian Normal University,\ P. R. China}
\email{lwc@fjnusoft.com}
\thanks{This work was supported by the National Natural Science Foundation of China(11371225) and the Natural Science Foundation of Fujian Province in China (2011J01006).}

\subjclass[2000]{Primary : 33C15, 30D30, 30D35
}

\keywords{Confluent hypergeometric functions, Nevanlinna theory, Wiman-Valiron theory}

\begin{abstract}
The confluent hypergeometric functions (the Kummer functions) defined by
${}_{1}F_{1}(\alpha;\gamma;z):=\sum_{n=0}^{\infty}\frac{(\alpha)_{n}}{n!(\gamma)_{n}}z^{n}\ (\gamma\neq 0,-1,-2,\cdots)$,
which are of many properties and great applications in statistics, mathematical physics, engineering and so on, have been given.
In this paper, we investigate some new properties of ${}_{1}F_{1}(\alpha;\gamma;z)$ from the perspective of value distribution theory. Specifically, two different growth orders are obtained for $\alpha\in \mathbb{Z}_{\leq 0}$ and $\alpha\not\in \mathbb{Z}_{\leq 0}$, which are corresponding to the reduced case and non-degenerated case of ${}_{1}F_{1}(\alpha;\gamma;z)$. Moreover, we get an asymptotic estimation of characteristic function $T(r,{}_{1}F_{1}(\alpha;\gamma;z))$ and a more precise result of $m\left(r, \frac{{}_{1}F_{1}'(\alpha;\gamma;z)}{{}_{1}F_{1}(\alpha;\gamma;z)}\right)$, compared with the Logarithmic Derivative Lemma. Besides, the distribution of zeros of the confluent hypergeometric functions is discussed. Finally, we show how a confluent hypergeometric function and an entire function are uniquely determined by their $c$-values.
\end{abstract}

\maketitle



\section{Introduction}



\subsection{Background}

It is a well-known fact that some special functions are the solutions of second-order differential equations, whose standard form is
\begin{equation}
\label{E:1}
\frac{d^{2}y}{dz^{2}}+p(z)\frac{dy}{dz}+q(z)y(z)=0,
\end{equation}
where $p(z)$ and $q(z)$ are given complex-valued functions.

The singularities of (\ref{E:1}) may be those of solutions, and solutions are analytic at the ordinary points of (\ref{E:1}).

\begin{definition}
If $z=z_{0}$ is a singularity of (\ref{E:1}), then $z=z_{0}$ is a regular singularity of (\ref{E:1}) if and only if
\begin{equation}
(z-z_{0})\cdot p(z), \ \ (z-z_{0})^{2}\cdot q(z)
\end{equation}
are analytic in $\{z: |z-z_{0}|<R\}$, where $R$ is a positive real number.
\end{definition}

\begin{definition}
A class of equations with $n$ regular singularities is called Fuchsian type equations, where $n$ is a positive integer.
\end{definition}

When $n=3$, the Fuchsian type equation
\begin{equation}
\label{E:2}
z(1-z)\frac{d^{2}y}{dz^{2}}+\left[\gamma-\left(\alpha+\beta+1\right)z\right]\frac{dy}{dz}-\alpha\beta y(z)=0
\end{equation}
is called the Gauss hypergeometric equation, which has three regular singularities $\{0,1,\infty\}$.
\begin{equation}
\label{E:3}
{}_{2}F_{1}\left(\alpha,\beta;\gamma;z\right):=\sum_{n=0}^{\infty}\frac{(\alpha)_{n}(\beta)_{n}}{n!(\gamma)_{n}}z^{n}\ (\gamma\neq 0,-1,-2,\cdots)
\end{equation}
satisfies (\ref{E:2}) and is convergent in $D(0,1):=\{z:|z|<1\}$. When $\gamma\not\in \mathbb{Z}$,
$z^{1-\gamma}{}_{2}F_{1}(\alpha-\gamma+1,\beta-\gamma+1;2-\gamma;z)$ is linearly independent of (\ref{E:3}) and convergent in $D(0,1)$, where $(\alpha)_{n}:=\alpha(\alpha+1)\cdots(\alpha+n-1)=\frac{\Gamma(\alpha+n)}{\Gamma(\alpha)}$ is called rising factorial of length $n$.

Moreover, each equation with three
regular singularities can be transferred into the Gauss hypergeometric equation, whose solutions are related to the hypergeometric functions ${}_{2}F_{1}\left(\alpha,\beta;\gamma;z\right)$.

If we replace $z$ by $\frac{z}{b}$ in the Gauss hypergeometric equation (\ref{E:2}), let $b=\beta\rightarrow \infty$, then (\ref{E:2}) will be reduced into the confluent hypergeometric equation
\begin{equation}
\label{E:4}
z\frac{d^{2}y}{dz^{2}}+(\gamma-z)\frac{dy}{dz}-\alpha y(z)=0.
\end{equation}
It is easy to see that $z=0$ is still a regular singularity, but $z=\infty$ becomes an irregular singularity. Around the origin,
\begin{equation}
{}_{1}F_{1}(\alpha;\gamma;z):=\sum_{n=0}^{\infty}\frac{(\alpha)_{n}}{n!(\gamma)_{n}}z^{n}\ (\gamma\neq 0,-1,-2,\cdots)
\end{equation}
is one solution of (\ref{E:4}), which is entire, called the confluent hypergeometric function (the Kummer function).
In particular, when $\alpha=-n=0,-1,-2,\cdots$, ${}_{1}F_{1}(-n;\gamma;z)$ is a polynomial.

On the other hand, from the viewpoint of physics (see \cite{Slater}, Chapter 1), physicists started to focus on the confluent hypergeometric functions when they tried to solve equations of the type
\begin{equation}
\bigtriangledown^{2}\psi+\frac{8\pi^{2}M}{h^{2}}\left(E+\frac{\mu}{r}\right)\psi=0,
\end{equation}
where $\bigtriangledown^{2}=\frac{\partial^{2}}{\partial x^{2}}+\frac{\partial^{2}}{\partial y^{2}}+\frac{\partial^{2}}{\partial z^{2}}$, $\psi$ is a function of the spherical coordinates $r$, $\theta$ and $\phi$.

Then $\psi=\alpha(r) \beta(\theta) \gamma(\phi)$, where $\gamma(\phi)=e^{mi\phi}$, $\beta(\theta)=P_{l}^{m}(\cos \theta)$, a Legendre polynomial, and $\alpha(r)$ is a function satisfies a second-order differential equation
\begin{equation}
r^{2}\alpha''(r)+2r\alpha'(r)+(ar^{2}+br)R(r)=c,
\end{equation}
whose solutions are related to the confluent hypergeometric functions.

\subsection{The general solutions of confluent hypergeometric equations near the origin} (see \cite{Bateman})
If we put $z=\lambda \xi$, $y=z^{\rho}\cdot e^{hz}\cdot \eta$, the confluent hypergeometric equation (\ref{E:4}) transforms into
\begin{equation}
\begin{split}
&\xi \frac{d^{2}\eta}{d\xi^{2}}+\left[\gamma+2\rho-(1-2h)\lambda \xi\right]\frac{d\eta}{d\xi}\\
&+\left[\frac{\rho(\rho+\gamma-1)}{\xi}-\lambda(\alpha-h\gamma+\rho-2h\rho)+\lambda^{2}h(h-1)\xi\right]\eta=0.
\end{split}
\end{equation}
This equation will have the same form of (\ref{E:4}) if $\rho=0$ or $\rho=1-\gamma$, $h=0$ or $h=1$, $\lambda(1-2h)=1$.

Since one solution of (\ref{E:4}) is
\begin{equation}
y_{1}={}_{1}F_{1}(\alpha;\gamma;z),
\end{equation}
and the transformations give three further solutions,
\begin{equation}
y_{2}=z^{1-\gamma}{}_{1}F_{1}(\alpha-\gamma+1;2-\gamma;z),
\end{equation}
\begin{equation}
y_{3}=e^{z}{}_{1}F_{1}(\gamma-\alpha;\gamma;-z),
\end{equation}
\begin{equation}
y_{4}=z^{1-\gamma}e^{z}{}_{1}F_{1}(1-\alpha;2-\gamma;-z).
\end{equation}
From their behavior at the origin, it follows that $y_{1}$ and $y_{2}$ are linearly independent if $\gamma\not\in \mathbb{Z}$ so that (in this case) the general solution of (\ref{E:4}) may be written as
$y=Ay_{1}+By_{2}$, where $A$ and $B$ are constants.

Note that both $y_{1}$ and $y_{3}$ are solutions of (\ref{E:4}) and regular at the origin, having the value unity there. If $\gamma\not\in \mathbb{Z}$, differential equation (\ref{E:4}) cannot have more than one such solution, then we must have $y_{1}=y_{3}$, i.e.,
\begin{equation}
{}_{1}F_{1}(\alpha;\gamma;z)=e^{z}{}_{1}F_{1}(\gamma-\alpha;\gamma;-z),
\end{equation}
which is known as Kummer's transformation. Similarly, $y_{2}=y_{4}$ can be obtained by Kummer's transformation also.

\subsection{Some properties of the confluent hypergeometric functions} (see \cite{Bateman} and \cite{Slater})
For each confluent hypergeometric function ${}_{1}F_{1}(\alpha;\gamma;z)$, there exist four functions
\begin{equation}
{}_{1}F_{1}(\alpha+1;\gamma;z), \ \ {}_{1}F_{1}(\alpha-1;\gamma;z), \ \ {}_{1}F_{1}(\alpha;\gamma+1;z), \ \ {}_{1}F_{1}(\alpha;\gamma-1;z)
\end{equation}
contiguous to it are linearly connected. They form the following recurrence relations.
\begin{equation}
(\gamma-\alpha){}_{1}F_{1}(\alpha-1;\gamma;z)+(2\alpha-\gamma+z){}_{1}F_{1}(\alpha;\gamma;z)-\alpha {}_{1}F_{1}(\alpha+1;\gamma;z)=0,
\end{equation}
\begin{equation}
\gamma(\gamma-1){}_{1}F_{1}(\alpha;\gamma-1;z)-\gamma(\gamma-1+z){}_{1}F_{1}(\alpha;\gamma;z)+z(\gamma-\alpha) {}_{1}F_{1}(\alpha;\gamma+1;z)=0,
\end{equation}
\begin{equation}
(\alpha-\gamma+1){}_{1}F_{1}(\alpha;\gamma;z)-\alpha {}_{1}F_{1}(\alpha+1;\gamma;z)+(\gamma-1) {}_{1}F_{1}(\alpha;\gamma-1;z)=0,
\end{equation}
\begin{equation}
\gamma {}_{1}F_{1}(\alpha;\gamma;z)-\gamma {}_{1}F_{1}(\alpha-1;\gamma;z)-z {}_{1}F_{1}(\alpha;\gamma+1;z)=0,
\end{equation}
\begin{equation}
\gamma (\alpha+z){}_{1}F_{1}(\alpha;\gamma;z)+z(\alpha-\gamma) {}_{1}F_{1}(\alpha;\gamma+1;z)-\alpha \gamma {}_{1}F_{1}(\alpha+1;\gamma;z)=0,
\end{equation}
\begin{equation}
(\alpha-1+z) {}_{1}F_{1}(\alpha;\gamma;z)+(\gamma-\alpha) {}_{1}F_{1}(\alpha-1;\gamma;z)+(1-\gamma) {}_{1}F_{1}(\alpha;\gamma-1;z)=0.
\end{equation}
These relations are not all independent. If we choose two of them suitably, all the others follow by simple operations.

Any function ${}_{1}F_{1}(\alpha+m;\gamma+n;z)$, $m$, $n$ integers, is said to be associated with ${}_{1}F_{1}(\alpha;\gamma;z)$. By repeated application of the relations between contiguous functions, any three associated functions are connected by a homogeneous linear relation whose coefficients are polynomials in $z$. Moreover,
\begin{equation}
\label{E:differentiation formula}
\frac{d^{n}}{dz^{n}}{}_{1}F_{1}(\alpha;\gamma;z)=\frac{(\alpha)_{n}}{(\gamma)_{n}}{}_{1}F_{1}(\alpha+n;\gamma+n;z).
\end{equation}

\subsection{Integral representations of the confluent hypergeometric functions} (see \cite{Bateman} and \cite{Wang and Guo})
It is known that homogeneous linear differential equations whose coefficients are linear functions of the independent variable can be integrated by Laplace integrals. Then
\begin{equation}
{}_{1}F_{1}(\alpha;\gamma;z)=\frac{\Gamma(\gamma)}{\Gamma(\alpha)\Gamma(\gamma-\alpha)}
\int_{0}^{1}e^{zu}u^{\alpha-1}(1-u)^{\gamma-\alpha-1}du,
\end{equation}
where $\Re \gamma>\Re \alpha >0$, $\arg u=\arg (1-u)=0$.

Another type of integral representation uses Mellin-Barnes integrals, thus
\begin{equation}
{}_{1}F_{1}(\alpha;\gamma;z)=\frac{1}{2\pi i}\cdot\frac{\Gamma(\gamma)}{\Gamma(\alpha)}\cdot
\int_{-i\infty}^{i\infty}\frac{\Gamma(\alpha+s)\Gamma(-s)}{\Gamma(\gamma+s)}(-z)^{s}ds,
\end{equation}
where $\alpha\neq 0,-1,-2,\cdots$, $|\arg(-z)|<\frac{\pi}{2}$.

Besides,
\begin{equation}
{}_{1}F_{1}(\alpha;\gamma;z)=\frac{1}{(2\pi i)^{2}}\cdot e^{-i\pi \gamma}\Gamma(1-\alpha)\Gamma(\gamma)\Gamma(1+\alpha-\gamma)
\int^{(1+,0+,1-,0-)}e^{zt}t^{\alpha-1}(1-t)^{\gamma-\alpha-1}dt,
\end{equation}
where the contour of integration is a double loop starting at a point $A$ between $0$ and $1$ on the real $t$ axis,
with $\arg t=\arg(1-t)=0$ at $A$, encircling first $t=1$ in the positive sense, then $t=0$ in the positive sense, then
$t=1$ in the negative sense, and finally $t=0$ in the negative sense, returning to $A$.

\begin{equation}
{}_{1}F_{1}(\alpha;\gamma;z)=\frac{1}{2\pi i}\cdot\frac{\Gamma(\gamma)\Gamma(\alpha-\gamma+1)}{\Gamma(\alpha)}
\int_{0}^{(1+)}e^{zt}t^{\alpha-1}(t-1)^{\gamma-\alpha-1}dt,
\end{equation}
where $\Re \alpha>0$, the contour is a loop starting (and ending) at $t=0$ and encircling $1$ once in the positive sense.

\begin{equation}
{}_{1}F_{1}(\alpha;\gamma;z)=-\frac{1}{2\pi i}\cdot\frac{\Gamma(\gamma)\Gamma(1-\alpha)}{\Gamma(\gamma-\alpha)}
\int_{1}^{(0+)}e^{zt}t^{\alpha-1}(1-t)^{\gamma-\alpha-1}dt,
\end{equation}
where $\Re (\gamma-\alpha)>0$, the contour is the above one by interchanging the roles of $0$ and $1$.

\subsection{Asymptotic behavior} (see \cite{Bateman})
The asymptotic behavior of confluent hypergeometric functions is different according as the large quantity is the variable, one of the parameters, or two or all three of these quantities.

Asymptotic behavior for large $|z|$ is
\begin{equation}
\label{E:asymptotic-1}
\begin{split}
&{}_{1}F_{1}(\alpha;\gamma;z)=\frac{\Gamma(\gamma)}{\Gamma(\gamma-\alpha)}\left(\frac{e^{i\pi \epsilon}}{z}\right)^{\alpha}
\sum_{n=0}^{M}\frac{(\alpha)_{n}(\alpha-\gamma+1)_{n}}{n!}(-z)^{-n}+O(|z|^{-\alpha-M-1})\\
&+\frac{\Gamma(\gamma)}{\Gamma(\alpha)}e^{z}z^{\alpha-\gamma}\sum_{n=0}^{N}\frac{(\gamma-\alpha)_{n}(1-\alpha)_{n}}{n!}
z^{-n}+O(\left|e^{z}z^{\alpha-\gamma-N-1}\right|),
\end{split}
\end{equation}
where $M,N=0,1,2,\cdots$, $\epsilon=1$ if $\Im z>0$, $\epsilon=-1$ if $\Im z<0$, $z\rightarrow \infty$,
$-\pi<\arg z< \pi$.

In particular, as $\Re z\rightarrow\infty$,
\begin{equation}
\label{E:asymptotic-2}
{}_{1}F_{1}(\alpha;\gamma;z)=\frac{\Gamma(\gamma)}{\Gamma(\alpha)}e^{z}z^{\alpha-\gamma}\left[1+O(|z|^{-1})\right],
\end{equation}
and as $\Re z\rightarrow -\infty$,
\begin{equation}
\label{E:asymptotic-3}
{}_{1}F_{1}(\alpha;\gamma;z)=\frac{\Gamma(\gamma)}{\Gamma(\gamma-\alpha)}(-z)^{-\alpha}\left[1+O(|z|^{-1})\right].
\end{equation}
Moreover, if $\alpha$ and $z$ are bounded, $\gamma\rightarrow \infty$,
\begin{equation}
{}_{1}F_{1}(\alpha;\gamma;z)=1+O(|\gamma|^{-1}).
\end{equation}
If $\gamma-\alpha$ and $z$ are bounded, $\gamma\rightarrow \infty$,
\begin{equation}
{}_{1}F_{1}(\alpha;\gamma;z)=e^{z}\left[1+O(|\gamma|^{-1})\right].
\end{equation}

\subsection{The generalized Laguerre polynomials} (see \cite{Bateman}, \cite{Lebedev} and \cite{Wang and Guo})
Laguerre's equation
\begin{equation}
z\frac{d^{2}y}{dz^{2}}+(\mu+1-z)\frac{dy}{dz}+ny=0 \ \ (n=0,1,2,\cdots)
\end{equation}
is a special case of the confluent hypergeometric equation
\begin{equation}
z\frac{d^{2}y}{dz^{2}}+(\mu+1-z)\frac{dy}{dz}-\alpha y=0
\end{equation}
by choosing $\alpha=-n=0,-1,-2,\cdots$. Then the polynomial
${}_{1}F_{1}(-n;\mu+1;z)$ is a solution of this equation for $\mu\not\in \mathbb{Z}_{<0}$. In particular, when $\mu=0$,
\begin{equation}
{}_{1}F_{1}(-n;1;z)=\frac{e^{z}}{n!}\cdot \frac{d^{n}}{dz^{n}}\left(e^{-z}z^{n}\right)
=\frac{1}{n!}\left(\frac{d}{dz}-1\right)^{n}z^{n},
\end{equation}
which is the Rodrigues formula. The polynomial $F(-n;1;z)$ is called the Laguerre polynomial.
\begin{definition}
The generalized Laguerre polynomials (The Sonine polynomials) are defined by
\begin{equation}
L_{n}^{\mu}(z)=\frac{z^{-\mu}e^{z}}{n!}\cdot \frac{d^{n}}{dz^{n}}\left(e^{-z}z^{n+\mu}\right),
\end{equation}
where $\mu\not\in \mathbb{Z}_{<0}$.
\end{definition}
Then
\begin{equation}
L_{n}^{\mu}(z)=\frac{z^{-\mu}}{n!}\cdot\left(\frac{d}{dz}-1\right)^{n} z^{n+\mu}=\frac{\Gamma(\mu+1+n)}{n!\cdot\Gamma(\mu+1)}{}_{1}F_{1}(-n;\mu+1;z).
\end{equation}
Obviously, $L_{n}^{\mu}(z)$ is a polynomial of degree $n$, and $L_{n}^{0}(z)={}_{1}F_{1}(-n;1;z)$.

The Laguerre's equation is equivalent to the statement that $L_{n}^{\mu}(z)$ is the eigenfunction with respect to eigenvalue $n$ of the second-order differential operator
\begin{equation}
\mathfrak{L}=-z\frac{d^{2}}{dz^{2}}+(z-\mu-1)\frac{d}{dz}.
\end{equation}
Then the operator is self-adjoint with respect to the inner product
\begin{equation}
\langle f,g\rangle=\int_{0}^{\infty}f(z)g(z)\omega(z)dz,
\end{equation}
where $\omega(z)$ is weight function, i.e.,
\begin{equation}
\langle \mathfrak{L}f, g\rangle=\langle f, \mathfrak{L}g\rangle.
\end{equation}

Based on the integral representations of confluent hypergeometric functions, one can get the degenerated cases for $\alpha\in\mathbb{Z}_{\leq 0}$, the basic one is
\begin{equation}
L_{n}^{\mu}(z)=\frac{(-1)^{n}}{2\pi i}\int^{(0+)}e^{zt}(1-t)^{\mu+n}t^{-n-1}dt,
\end{equation}
where the contour encircles $t=0$ in the positive sense, and $t=1$ is outside the contour, $|\arg(1-t)|<\pi$.

Set $t=1-\frac{v}{z}$ in the above equality, then
\begin{equation}
\begin{split}
L_{n}^{\mu}(z)&=e^{z}z^{-\mu}\frac{1}{2\pi i}\int^{(z+)}\frac{e^{-v}v^{\mu+n}}{(v-z)^{n+1}}dv\\
&=\frac{e^{z}z^{-\mu}}{n!}\cdot\frac{d^{n}}{dz^{n}}\left(z^{\mu+n}e^{-z}\right),
\end{split}
\end{equation}
which implies a differentiation formula
\begin{equation}
L_{n}^{m}(z)=(-1)^{m}\frac{d^{m}}{dz^{m}}L_{m+n}^{0}(z),
\end{equation}
where $n$ and $m$ are nonnegative integers.

From the integral representation, one can get
\begin{equation}
\sum_{n=0}^{\infty}L_{n}^{\mu}(z)t^{n}=\frac{e^{-\frac{zt}{1-t}}}{(1-t)^{\mu+1}}\ \ (|t|<1),
\end{equation}
the function of right hand side is called a generating function of $L_{n}^{\mu}(z)$.

Besides, if we consider a collection of generalized Laguerre polynomials $\{L_{n}^{\mu}(z)\}$, $n=0,1,2,\cdots$, then
\begin{equation}
\int_{0}^{\infty}z^{\mu}e^{-z}L_{n}^{\mu}(z)L_{n'}^{\mu}(z)dz=\frac{\Gamma(\mu+n+1)}{n!}\delta_{nn'},
\end{equation}
which is the orthogonality of the generalized Laguerre polynomials.

\subsection{Applications}
In statistics, ${}_{1}F_{1}(\alpha;\gamma;z)$ with integral and half integral values of the parameters $\alpha$ and $\gamma$, occurs in the distributions of many important statistics, such as F-statistics and $D^{2}$-statistics (see \cite{P. Nath}). Moreover, it is well-known that Error function and Incomplete function appear in statistics commonly, which can be represented by the confluent hypergeometric functions, i.e.,
\begin{equation}
\ erf x= \frac{2}{\sqrt{\pi}}\int_{0}^{x} e^{-t^{2}}dt=\frac{2x}{\sqrt{\pi}}e^{-x^{2}}{}_{1}F_{1}\left(1;\frac{3}{2};x^{2}\right),
\end{equation}
\begin{equation}
\Gamma_{x}(n)=\int_{0}^{x}e^{-t}t^{n-1}dt=\frac{1}{n}e^{-x}x^{n}{}_{1}F_{1}(1;n+1;x).
\end{equation}
The normal distribution function with mean $m$ and standard deviation $\sigma$ is given by
\begin{equation}
\begin{split}
&\frac{1}{\sigma\sqrt{2\pi}}\int_{-\infty}^{x}e^{-(t-m)^{2}/\left(2\sigma^{2}\right)}dt\\
&=\frac{1}{2}\ erfc \left(\frac{m-x}{\sigma\sqrt{2}}\right)\\
&=\frac{1}{2}\left[1-\ erf \left(\frac{m-x}{\sigma\sqrt{2}}\right)\right]\\
&=\frac{1}{2}\left[1-\frac{2\left(\frac{m-x}{\sigma\sqrt{2}}\right)}
{\sqrt{\pi}}e^{-\left(\frac{m-x}{\sigma\sqrt{2}}\right)^{2}}
{}_{1}F_{1}\left(1;\frac{3}{2};\left(\frac{m-x}{\sigma\sqrt{2}}\right)^{2}\right)\right].
\end{split}
\end{equation}
The Poisson-Charlier polynomials, arising in the calculus of probability, can be expressed by means of generalized Laguerre polynomials as
\begin{equation}
C_{n}(x;\mu)=_{2}F_{0}\left(-n,-x;-\frac{1}{\mu}\right)=(-1)^{n}\cdot n!\cdot L_{n}^{-1-x}\left(-\frac{1}{\mu}\right).
\end{equation}

In quantum mechanics, the study of almost all physical systems allowing exact solutions for Schr$\ddot{o}$dinger equation (e.g., the harmonic oscillator, the hydrogen atom, the Morse, P$\ddot{o}$schl-Teller, Wood-Saxon, Hulth$\acute{e}$n or Eckart potentials) is reduced to the analysis of either hypergeometric or confluent hypergeometric equations (see \cite{J. Negro} and \cite{J. B. Seaborn}). Besides, for the two-dimensional Coulomb potential, each physical normalized eigenfunction associated to a bounded state can be described in two ways by means of the confluent hypergeometric functions.

What is more, the confluent hypergeometric equations are related to some equations owning one regular singularity and one irregular singularity, such as the Whittaker equation, the Bessel equation and the Coulomb wave equation. Specifically, the Bessel equation
\begin{equation}
\frac{d^{2}f}{dz^{2}}+\frac{1}{z}\frac{df}{dz}+\left(1-\frac{\nu^{2}}{z^{2}}\right)f(z)=0
\end{equation}
can be viewed as a special case of the confluent hypergeometric equation under the transformation
\begin{equation}
f(z)=z^{\nu}e^{-\frac{\xi}{2}}y(z),\ \xi=2iz,
\end{equation}
where $y(\xi)$ satisfies the confluent hypergeometric equation
\begin{equation}
\xi\frac{d^{2}y}{d\xi^{2}}+(2\nu+1-\xi)\frac{dy}{d\xi}-\left(\nu+\frac{1}{2}\right)y(\xi)=0
\end{equation}
with $\alpha=\nu+\frac{1}{2}$, $\gamma=2\nu+1=2\alpha$ in (\ref{E:4}).

The Whittaker equation
\begin{equation}
\frac{d^{2}\omega}{dz^{2}}+\left(-\frac{1}{4}+\frac{k}{z}+\frac{\frac{1}{4}-m^{2}}{z^{2}}\right)\omega=0,
\end{equation}
which is the normal form of confluent hypergeometric equation (\ref{E:4}) via the substitutions $y(z)=e^{\frac{z}{2}}z^{-\frac{\gamma}{2}}\omega(z)$, $k=\frac{\gamma}{2}-\alpha$ and $m=\frac{\gamma-1}{2}$. It has a regular singularity at the origin with exponents $\frac{1}{2}\pm m$, and an irregular singularity at infinity of rank one. If $2m$ is not an integer, then the two linearly independent solutions at origin are
\begin{equation}
M_{k,m}(z)=e^{-\frac{z}{2}}z^{\frac{\gamma}{2}}F(\alpha;\gamma;z)
=e^{-\frac{z}{2}}z^{\frac{1}{2}+m}{}_{1}F_{1}\left(\frac{1}{2}+m-k;1+2m;z\right)
\end{equation}
and
\begin{equation}
M_{k,-m}(z)=e^{-\frac{z}{2}}z^{1-\frac{\gamma}{2}}F(\alpha-\gamma+1;2-\gamma;z)
=e^{-\frac{z}{2}}z^{\frac{1}{2}-m}{}_{1}F_{1}\left(\frac{1}{2}-m-k;1-2m;z\right).
\end{equation}

Moreover, the confluent hypergeometric functions are connected with representations of the group of third-order triangular matrices (see \cite{N. Ja. Vilenkin}). The elements of this group are of the form
\begin{equation}
\left(
\begin{array}{ccc}
1 & \alpha & \beta\\
0 & \gamma & \delta\\
0 & 0 & 1\\
\end{array}
\right),
\end{equation}
where $\alpha$, $\beta$, $\gamma$, $\delta$ are real numbers, and $\gamma>0$. Vilenkin (see \cite{N. Ja. Vilenkin}) constructs irreducible representations of this group, in which the diagonal matrices correspond to operators of multiplication by an exponential function. The other group elements correspond to integral operators whose kernels can be expressed in terms of Whittaker functions. The identification can be used to obtain various properties of the Whittaker functions, including recurrence relations and derivatives.

In this paper, we will use Nevanlinna's value distribution theory and Wiman-Valiron theory to describe some new properties of confluent hypergeometric functions. For the convenience of readers, we list
the following standard notations and results in Nevanlinna theory (see \cite{Hayman}).

Let $f$ be a nonconstant meromorphic function in the complex plane. Then the proximity function $m(r,f)$, counting function $N(r,f)$, reduced counting function $\overline{N}(r,f)$, characteristic function $T(r,f)$ and order $\sigma(f)$ are defined by
\begin{equation}
m(r,f)=\dfrac{1}{2\pi}\int_{0}^{2\pi}\log^{+}|f(re^{i\theta})|d\theta,
\end{equation}
\begin{equation}
N(r,f)=\int_{0}^{r}\dfrac{n(t,f)-n(0,f)}{t}dt+n(0,f)\log r,
\end{equation}
\begin{equation}
\overline{N}(r,f)=\int_{0}^{r}\dfrac{\overline{n}(t,f)-\overline{n}(0,f)}{t}dt +\overline{n}(0,f)\log r,
\end{equation}
\begin{equation}
T(r,f)=m(r,f)+N(r,f),
\end{equation}
\begin{equation}
\sigma(f)=\limsup_{r\rightarrow\infty}\frac{\log^{+} T(r,f)}{\log r},
\end{equation}
respectively, where $\log^{+}x=\max\{\log x,0\}$ for all $x\geq0$, $n(t,f)$ denotes the number of poles of $f$ in the closed disc $\overline{D(0,t)}=\{z:|z|\leq t\}$, counting multiplicities and $\overline{n}(t,f)$ denotes the number of poles of $f$ in $\overline{D(0,t)}$, ignoring multiplicities.

We recall the following results:

(i) The arithmetic properties of $T(r,f)$ and $m(r,f)$:
\begin{equation}
T(r,fg)\leq T(r,f)+T(r,g),
\end{equation}
\begin{equation}
T(r,f+g)\leq T(r,f)+T(r,g)+O(1).
\end{equation}
The same inequalities hold for $m(r,f)$.

(ii) $T(r,f)$ is an increasing function with respect to $r$. Moreover, $f$ is a rational function if and only if
\begin{equation}
T(r,f)=O(\log r).
\end{equation}

(iii)Nevanlinna's First Main Theorem:
\begin{equation}
T(r,f)=T\left(r,\dfrac{1}{f}\right)+O(1).
\end{equation}

(iv) The Logarithmic Derivative Lemma: If $f$ is of finite order, then
\begin{equation}
m\left(r,\dfrac{f'}{f}\right)=O(\log r).
\end{equation}
If $f$ is of infinite order, then
\begin{equation}
m\left(r,\frac{f'}{f}\right)=O(\log rT(r,f)),
\end{equation}
outside of a possible exceptional set of finite linear measure.

(v) Nevanlinna's Second Main Theorem:
\begin{equation}
(q-2)T(r,f)\leq \sum_{j=1}^{q}N\left(r,\frac{1}{f-a_j}\right)+S(r,f),
\end{equation}
where $a_1,a_2,\cdots,a_q$ are distinct complex numbers in $\mathbb{\hat{C}}=\mathbb{C}\bigcup\{\infty\}$ and $S(r,f)$ denotes a quantity satisfying $S(r,f)=O(\log(rT(r,f)))$,
outside of a possible exceptional set of finite linear measure. If $f$ is of finite order, then $S(r,f)=O(\log r)$.

\section{Some Lemmas}
In order to prove our main results in Section 3-5, we need the following lemmas.

\begin{lemma}[\cite{Laine}]
\label{L:1}
If $f$ is an entire function of order $\sigma$, then
\begin{equation}
\sigma=\lim\sup_{r\rightarrow\infty}\frac{\log^{+}\nu_{f}(r)}{\log r}
=\lim\sup_{r\rightarrow\infty}\frac{\log^{+}\log^{+}\mu_{f}(r)}{\log r},
\end{equation}
where $\mu_{f}(r)$ and $\nu_{f}(r)$ are the maximum term and central index of $f(z)$ respectively.

\end{lemma}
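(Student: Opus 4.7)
My plan is to exploit the chain of comparisons
$$\log\log M(r,f) \;\sim\; \log\log \mu_f(r) \;\sim\; \log \nu_f(r) \cdot \log r$$
(up to lower-order terms) and match them against the Hadamard definition $\sigma(f) = \limsup_{r\to\infty} \frac{\log\log M(r,f)}{\log r}$, which is equivalent to the Nevanlinna definition given in the introduction since $T(r,f) \leq \log^+ M(r,f) \leq \frac{R+r}{R-r} T(R,f)$ for an entire $f$. So it suffices to establish the two equalities involving $\mu_f$ and $\nu_f$.

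The first equality $\limsup \frac{\log^+\log^+ \mu_f(r)}{\log r} = \sigma$ I would get from the classical two-sided estimate. Writing $f(z) = \sum a_n z^n$, Cauchy's inequality yields $|a_n|r^n \leq M(r,f)$ for every $n$, hence $\mu_f(r) \leq M(r,f)$, which gives the inequality $\leq \sigma$. For the reverse, I would fix $R = 2r$ and write $M(r,f) \leq \sum_{n=0}^\infty |a_n|r^n \leq \mu_f(R)\sum_{n=0}^\infty 2^{-n} = 2\mu_f(2r)$; taking $\log\log$ and dividing by $\log r$ this loses nothing in the limsup, producing $\sigma \leq \limsup \frac{\log^+\log^+\mu_f(r)}{\log r}$.

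For the equality $\limsup \frac{\log^+ \nu_f(r)}{\log r} = \sigma$ I would start from the defining identity $\mu_f(r) = |a_{\nu_f(r)}|\, r^{\nu_f(r)}$. In one direction, using $|a_n| \leq \mu_f(r_0)/r_0^n$ at a fixed $r_0$, we obtain $\mu_f(r) \leq \mu_f(r_0)(r/r_0)^{\nu_f(r)}$, so $\nu_f(r) \log(r/r_0) \geq \log \mu_f(r) - \log \mu_f(r_0)$, giving $\liminf$-type lower bound $\limsup \frac{\log \nu_f(r)}{\log r} \geq \limsup \frac{\log\log\mu_f(r)}{\log r} = \sigma$. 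In the other direction, the bound $|a_{\nu_f(r)}| \leq 1$ eventually (or more carefully, the elementary fact that $|a_n|^{1/n} \to 0$) combined with $\mu_f(r) = |a_{\nu_f(r)}|r^{\nu_f(r)} \geq 1$ for large $r$ produces an upper estimate $\nu_f(r) \log r \leq \log\mu_f(r) + O(\nu_f(r))$ after reorganizing. A cleaner route is to integrate the standard Valiron formula $\log\mu_f(r) = \log|a_0| + \int_0^r \frac{\nu_f(t)}{t}\,dt$ (valid since $\nu_f$ is nondecreasing and integer-valued), which on replacing $r$ by $2r$ gives $\nu_f(r)\log 2 \leq \log \mu_f(2r) - \log \mu_f(r) \leq \log \mu_f(2r)$, and hence $\limsup \frac{\log \nu_f(r)}{\log r} \leq \limsup \frac{\log \log \mu_f(2r)}{\log r} = \sigma$.

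The main obstacle, and the reason I would rely on the Valiron integral formula rather than on ad hoc coefficient comparisons, is bounding $\nu_f(r)$ from above: $\nu_f$ is the location of the maximum in a sum of many competing terms, so a direct pointwise estimate forces one either to know the decay rate of $|a_n|$ (which comes from $\sigma$ and thus is circular) or to compare two nearby radii, where the integral representation makes the jumps of $\nu_f$ telescope cleanly. Once that monotonicity-based estimate is in hand, the three $\limsup$s agree and the lemma follows.
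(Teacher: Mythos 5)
The paper offers no proof of this lemma at all --- it is quoted verbatim from Laine's monograph --- so there is nothing internal to compare your argument against. Your proof is, in substance, the standard one (and essentially the one in Laine): $\mu_f(r)\le M(r,f)\le 2\mu_f(2r)$ ties the maximum term to the order, the coefficient bound $|a_n|\le \mu_f(r_0)r_0^{-n}$ gives $\limsup\frac{\log\nu_f(r)}{\log r}\ge\limsup\frac{\log\log\mu_f(r)}{\log r}$, and the monotonicity of $\nu_f$ in the identity $\log\mu_f(r)=\log\mu_f(r_0)+\int_{r_0}^{r}\nu_f(t)\,t^{-1}dt$ gives $\nu_f(r)\log 2\le\log\mu_f(2r)-\log\mu_f(r)$ and hence the reverse inequality. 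Two small points to tidy: the Valiron formula as you wrote it, with $\log|a_0|$ and an integral from $0$, presumes $a_0\neq 0$; in general one must base it at the first nonvanishing coefficient (or at a positive $r_0$), which changes nothing in the limsup. Also, the heuristic chain $\log\log\mu_f(r)\sim\log\nu_f(r)\cdot\log r$ in your opening paragraph is not literally an asymptotic equivalence and should not be read as one --- but your actual argument never uses it, only the equality of the limsups, so the proof stands.
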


\begin{lemma}[\cite{Laine}]
\label{L:2}
Let $f$ be a transcendental entire function, let $0<\delta<\frac{1}{4}$ and $z$ be such that $|z|=r$ and that
\begin{equation}
|f(z)|>M(r,f)\nu_{f}(r)^{-\frac{1}{4}+\delta}
\end{equation}
holds. Then there exists a set $F\subset \mathbb{R}_{+}$ of finite logarithmic measure, i.e., $\int_{F}\frac{dt}{t}<+\infty$, such that
\begin{equation}
f^{(m)}(z)=\left(\frac{\nu_{f}(r)}{z}\right)^{m}(1+o(1))f(z)
\end{equation}
holds for all $m\geq 0$ and all $r\not\in F$.
\end{lemma}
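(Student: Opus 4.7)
The plan is to apply the Wiman--Valiron method to the Taylor expansion of $f$ at the origin. Write $f(z)=\sum_{n=0}^{\infty}a_n z^n$, and let $N:=\nu_f(r)$, so that $\mu_f(r)=|a_N|r^N$ is the maximum term. The aim is to show that, for $r$ outside an exceptional set of finite logarithmic measure, the series is so sharply peaked at $n=N$ that termwise differentiation behaves essentially like multiplication by $N/z$.

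First I would invoke the classical Valiron--Wiman inequality: for any $\eta>0$ there is an exceptional set $F_0\subset\mathbb{R}_{+}$ of finite logarithmic measure such that
\[
M(r,f)\le \mu_f(r)\,\bigl(\log\mu_f(r)\bigr)^{1+\eta}\qquad (r\notin F_0),
\]
so that $\log M(r,f)=(1+o(1))\log\mu_f(r)$. Combined with the hypothesis $|f(z)|>M(r,f)\,\nu_f(r)^{-1/4+\delta}$, this gives
\[
|f(z)|\ge \mu_f(r)\,\nu_f(r)^{-1/4+\delta/2}
\]
for all sufficiently large $r\notin F_0$.

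Next, split $f=S_1+S_2$, where $S_1$ collects the terms with $|n-N|\le N^{1/2+\epsilon}$ for some fixed small $\epsilon<\delta/2$, and $S_2$ the remaining tail. Standard Wiman--Valiron estimates, based on the concavity of $n\mapsto \log|a_n|+n\log r$ (which attains its maximum at $n=N$), yield a Gaussian-type bound $|a_n|r^n\le \mu_f(r)\exp\bigl(-c(n-N)^2/N\bigr)$ on a neighbourhood of $N$, together with geometric decay further out. Consequently $|S_2(z)|\le \mu_f(r)\exp\bigl(-c\,N^{2\epsilon}\bigr)$, which is $o(|f(z)|)$ by the lower bound above. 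Inside the window, for each fixed $m\ge 0$,
\[
n(n-1)\cdots(n-m+1)=N^m\bigl(1+O(N^{-1/2+\epsilon})\bigr),
\]
so differentiating $S_1$ term by term produces
\[
S_1^{(m)}(z)=\left(\frac{N}{z}\right)^{m}(1+o(1))\,S_1(z)=\left(\frac{N}{z}\right)^{m}(1+o(1))\,f(z).
\]

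It remains to bound the tail derivative. Estimating termwise,
\[
\bigl|S_2^{(m)}(z)\bigr|\le r^{-m}\sum_{|n-N|>N^{1/2+\epsilon}} n^m |a_n|r^n,
\]
and the polynomial factor $n^m$ is swallowed by the super-polynomial decay of $|a_n|r^n/\mu_f(r)$; hence $|S_2^{(m)}(z)|=o\bigl((N/r)^m|f(z)|\bigr)$. Assembling the two pieces yields the lemma with exceptional set $F=F_0$. The main obstacle is the underlying Valiron--Wiman inequality together with the Gaussian concentration of $|a_n|r^n$ around the central index; both rely on delicate control of $\log\mu_f(r)$ as a function of $r$ via the Borel--Nevanlinna growth lemma, which is precisely what forces the finite-logarithmic-measure exceptional set in the conclusion.
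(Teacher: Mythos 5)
The paper does not prove this lemma: it is quoted from Laine's monograph (Theorem 3.2 there, following Hayman's survey of the Wiman--Valiron method), so I compare your argument with that standard proof.

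There is a genuine gap at the step $S_1^{(m)}(z)=\left(\frac{N}{z}\right)^{m}(1+o(1))S_1(z)$. You pass from the per-term relative error $n(n-1)\cdots(n-m+1)=N^{m}\bigl(1+O(N^{-1/2+\epsilon})\bigr)$ to a relative error for the sum, which implicitly requires bounding $\sum_{n\in W}|a_n|r^{n}\bigl|P_m(n)/N^{m}-1\bigr|$ by $o(|f(z)|)$, where $W$ is your window and $P_m(n)=n(n-1)\cdots(n-m+1)$. But $\sum_{n\in W}|a_n|r^{n}$ is in general of order $N^{1/2}\mu_f(r)$ (for $f=e^{z}$ it is $(1+o(1))\sqrt{2\pi N}\,\mu_f(r)$, since all terms within distance $\sqrt{N}$ of the central index are comparable to the maximum term), while the hypothesis only guarantees $|f(z)|\ge \mu_f(r)N^{-1/4+\delta}$. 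Even granting your Gaussian bound, already for $m=1$ the absolute-value estimate gives $\sum_n|a_n|r^{n-1}|n-N|\asymp (N/r)\,\mu_f(r)$, which exceeds the required $o\bigl((N/r)|f(z)|\bigr)$ by a factor that can be as large as $N^{1/4-\delta}$. Both $f(z)=\sum a_nz^{n}$ and $zf'(z)-Nf(z)=\sum(n-N)a_nz^{n}$ exhibit massive cancellation at the points $z$ admitted by the hypothesis, and no termwise absolute-value argument can detect it. A telltale sign is that your proof never uses the exponent $-\frac14+\delta$, which is the crux of the theorem: the classical proof first establishes, via Parseval-type ($\ell^{2}$) estimates on $\sum(n-N)^{2}|a_n|^{2}r^{2n}$ together with the lower bound $|f(z)|>M(r,f)\nu_f(r)^{-1/4+\delta}$, that $f(ze^{\tau})=e^{N\tau}(1+o(1))f(z)$ for $|\tau|=O(1/N)$, and then obtains the derivative formula from Cauchy's integral formula on the circle $|w-z|=r/N$. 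Two smaller points: $n\mapsto\log|a_n|+n\log r$ need not be concave (the coefficients are arbitrary), so the concentration estimate must instead come from $\log\bigl(\mu_f(r)/(|a_n|r^{n})\bigr)\ge\int_t^{r}(\nu_f(s)-n)\frac{ds}{s}$ combined with the growth lemma applied to $\nu_f$; and your opening inequality is not needed for the lower bound on $|f(z)|$, since $M(r,f)\ge\mu_f(r)$ holds trivially by Cauchy's estimates.
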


\begin{lemma}[\cite{Hayman}]
\label{L:3}
Let $f(z)$ be a nonconstant entire function and $M(r, f):=\max_{|z|=r}|f(z)|$, then
\begin{equation}
T(r, f)\leq \log^{+}M(r,f)\leq \frac{R+r}{R-r}T(R, f)
\end{equation}
for $0\leq r < R < \infty$.
\end{lemma}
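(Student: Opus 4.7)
The plan is to prove the two inequalities in the statement separately, using only standard Nevanlinna-theoretic facts together with the Poisson--Jensen formula.

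For the left inequality $T(r,f)\le \log^{+}M(r,f)$, the point is simply that an entire function has no poles, so $n(t,f)\equiv 0$, whence $N(r,f)=0$ and $T(r,f)=m(r,f)$. Since $|f(re^{i\theta})|\le M(r,f)$ for every $\theta$, the monotonicity of $\log^{+}$ gives $\log^{+}|f(re^{i\theta})|\le \log^{+}M(r,f)$, and averaging over $\theta\in[0,2\pi]$ yields the bound.

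For the right inequality $\log^{+}M(r,f)\le \frac{R+r}{R-r}T(R,f)$, I would invoke the Poisson--Jensen formula on the disc $\{|\zeta|<R\}$: for any $z$ with $|z|=r<R$,
\begin{equation*}
\log|f(z)|=\frac{1}{2\pi}\int_{0}^{2\pi}\log|f(Re^{i\varphi})|\,\mathrm{Re}\!\left(\frac{Re^{i\varphi}+z}{Re^{i\varphi}-z}\right)d\varphi+\sum_{|a_{\nu}|<R}\log\left|\frac{R(z-a_{\nu})}{R^{2}-\bar{a}_{\nu}z}\right|,
\end{equation*}
where the $a_{\nu}$ are the zeros of $f$ in $|\zeta|<R$ (this requires the harmless preliminary reduction to $f(0)\ne 0$; otherwise factor out a suitable power of $z$). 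The sum over the zeros is nonpositive, since each Blaschke-type factor satisfies $|R(z-a_{\nu})|\le|R^{2}-\bar{a}_{\nu}z|$ on $|z|\le R$. Dropping it gives an upper bound for $\log|f(z)|$. Since the Poisson kernel $\mathrm{Re}\bigl(\tfrac{Re^{i\varphi}+z}{Re^{i\varphi}-z}\bigr)=\tfrac{R^{2}-|z|^{2}}{|Re^{i\varphi}-z|^{2}}$ is positive, one may replace $\log|f|$ by $\log^{+}|f|$ on the right to obtain an upper estimate for $\log^{+}|f(z)|$.

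Next I would estimate the Poisson kernel uniformly: for $|z|=r$,
\begin{equation*}
\frac{R^{2}-r^{2}}{|Re^{i\varphi}-z|^{2}}\le \frac{R^{2}-r^{2}}{(R-r)^{2}}=\frac{R+r}{R-r},
\end{equation*}
so that
\begin{equation*}
\log^{+}|f(z)|\le \frac{R+r}{R-r}\cdot\frac{1}{2\pi}\int_{0}^{2\pi}\log^{+}|f(Re^{i\varphi})|\,d\varphi=\frac{R+r}{R-r}\,m(R,f)=\frac{R+r}{R-r}\,T(R,f),
\end{equation*}
the last equality again using that $f$ is entire. Taking the supremum over $|z|=r$ gives the claimed upper bound. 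The only real subtlety is the Poisson--Jensen step and the sign of the Blaschke contribution; once those are in place the rest is a direct kernel estimate.
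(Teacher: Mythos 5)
Your proof is correct, and it is essentially the classical argument: the paper itself gives no proof of this lemma but merely cites Hayman's \emph{Meromorphic Functions}, where precisely this Poisson--Jensen argument (drop the nonpositive Blaschke sum, pass from $\log|f|$ to $\log^{+}|f|$ using positivity of the kernel, then bound the kernel by $\frac{R^{2}-r^{2}}{(R-r)^{2}}=\frac{R+r}{R-r}$) is used. The only remark worth making is that the reduction to $f(0)\neq 0$ is not even needed for the Poisson--Jensen formula at a point $z$ with $f(z)\neq 0$, and the maximizing point on $|z|=r$ is automatically not a zero of a nonconstant entire $f$; everything else is exactly as in the standard reference.
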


\begin{lemma}[\cite{Hayman}]
\label{L:4}
Let $h(z)$ be a nonconstant entire function and $\displaystyle f(z)=e^{h(z)}$, $\sigma$ and $\mu$ be the order and the lower order of $f(z)$.

\par(i) If $h(z)$ is a polynomial of degree $p$, then $\sigma=\mu=p$.

\par(ii) If $h(z)$ is a transcendental entire function, then $\sigma=\mu=\infty$.
\end{lemma}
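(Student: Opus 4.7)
The plan is to express both $\sigma(f)$ and $\mu(f)$ in terms of $\log M(r,h)$ and then handle the polynomial and transcendental cases separately. Since $f=e^{h}$ has no zeros and no poles, $N(r,f)=0$ and $T(r,f)=m(r,f)$, while $\log M(r,f)=A(r,h)$, where $A(r,h):=\max_{|z|=r}\Re h(z)$. Applying Lemma \ref{L:3} with $R=2r$ yields
\begin{equation*}
T(r,f)\le A(r,h)\qquad\text{and}\qquad A(r,h)=\log M(r,f)\le 3\,T(2r,f),
\end{equation*}
so $T(r,f)$ and $A(r,h)$ share the same upper and lower logarithmic growth rates as $r\to\infty$.

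Next I would invoke the Borel--Carath\'eodory inequality,
\begin{equation*}
M(r,h)\le \frac{2r}{R-r}A(R,h)+\frac{R+r}{R-r}|h(0)|\qquad(0<r<R),
\end{equation*}
together with the trivial bound $A(r,h)\le M(r,h)$, which is valid once $A(r,h)>0$; this holds for all large $r$ since otherwise $e^{h}$ would be bounded and hence constant by Liouville's theorem, contradicting the hypothesis. Taking $R=2r$ shows that $A(r,h)$ and $M(r,h)$ are comparable up to the dilation $r\mapsto 2r$ and an additive $O(1)$. Combining the two comparisons gives
\begin{equation*}
\sigma(f)=\limsup_{r\to\infty}\frac{\log^{+}M(r,h)}{\log r},\qquad \mu(f)=\liminf_{r\to\infty}\frac{\log^{+}M(r,h)}{\log r}.
\end{equation*}

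For part (i), a polynomial $h$ of degree $p$ satisfies $M(r,h)=c\,r^{p}(1+o(1))$, so $\log M(r,h)/\log r\to p$, which immediately gives $\sigma=\mu=p$. For part (ii), write $h(z)=\sum_{n\ge 0}a_{n}z^{n}$; transcendence of $h$ means infinitely many $a_{n}$ are nonzero, and Cauchy's coefficient estimate yields $M(r,h)\ge |a_{n}|r^{n}$ for each such $n$. Given any $k$, pick $n>k$ with $a_{n}\ne 0$; then $\log M(r,h)/\log r\ge n+o(1)>k$ for all large $r$, and since $k$ is arbitrary, $\log M(r,h)/\log r\to\infty$, giving $\sigma=\mu=\infty$. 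The only genuinely nontrivial ingredient is the Borel--Carath\'eodory estimate, which is classical and can be quoted; the remaining steps are standard bookkeeping between $T(r,f)$, $A(r,h)$, and $M(r,h)$.
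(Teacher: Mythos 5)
Your proof is correct. Note, however, that the paper offers no proof of this lemma at all: it is stated as a quoted result with a citation to Hayman, so there is nothing to compare against except the literature. Your argument is essentially the standard textbook one. The reduction $\log M(r,f)=A(r,h)$ with $A(r,h)=\max_{|z|=r}\Re h(z)$ is exactly right because $|e^{h}|=e^{\Re h}$, and your use of Lemma \ref{L:3} correctly transfers order and lower order between $T(r,f)$ and $A(r,h)$, since the dilation $r\mapsto 2r$ does not affect $\limsup$ or $\liminf$ of $\log(\cdot)/\log r$. The Borel--Carath\'eodory inequality is indeed the one nontrivial ingredient needed to pass from $A(r,h)$ up to $M(r,h)$, and you handled the only delicate point --- that $A(r,h)>0$ eventually, so that $\log^{+}M(r,f)$ really equals $A(r,h)$ for large $r$ --- via Liouville applied to $e^{h}$ (one can also note that $A(r,h)$ is nondecreasing by the maximum principle applied to $e^{h}$, so positivity at one radius propagates). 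Parts (i) and (ii) then follow cleanly from $\log M(r,h)/\log r\to p$ for a degree-$p$ polynomial and from the Cauchy estimate $M(r,h)\ge|a_{n}|r^{n}$ for a transcendental $h$. The proof is complete and self-contained, which is more than the paper provides.
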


\section{Nevanlinna Characteristic Of ${}_{1}F_{1}(\alpha,\gamma;z)$}

As we know, around the origin, ${}_{1}F_{1}(\alpha;\gamma;z)$ is one solution of $z\frac{d^{2}y}{dz^{2}}+(\gamma-z)\frac{dy}{dz}-\alpha y(z)=0$, which is an entire function. In this section,
we shall discuss the rate of growth of the function by showing estimates of maximum modulus function $M(r,{}_{1}F_{1}(\alpha;\gamma;z))$ and Nevanlinna characteristic function $T(r,{}_{1}F_{1}(\alpha;\gamma;z))$, computing the order $\sigma({}_{1}F_{1}(\alpha;\gamma;z))$, and providing a better estimation of $m\left(r,\frac{{}_{1}F_{1}'(\alpha;\gamma;z)}{{}_{1}F_{1}(\alpha;\gamma;z)}\right)$, which are our main results.

\begin{theorem}
\label{T:1}
Let ${}_{1}F_{1}(\alpha;\gamma;z)$ be a confluent hypergeometric function,
where $\gamma\neq 0,-1, -2, \cdots$. Then $T(r,{}_{1}F_{1}(\alpha;\gamma;z))=O(r)$ for $\alpha\not\in \mathbb{Z}_{\leq 0}$, $T\left(r,{}_{1}F_{1}(\alpha;\gamma;z)\right)=O(\log r)$ for $\alpha\in \mathbb{Z}_{\leq 0}$.
\end{theorem}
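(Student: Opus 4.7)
The plan is to bound the maximum modulus $M(r,{}_{1}F_{1}(\alpha;\gamma;z))$ in each case and then invoke Lemma \ref{L:3}, which gives $T(r,f)\le \log^{+}M(r,f)$.

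If $\alpha=-n\in\mathbb{Z}_{\le 0}$, then the Pochhammer symbol $(\alpha)_{k}$ contains the factor $\alpha+n=0$ as soon as $k\ge n+1$, so the defining series of ${}_{1}F_{1}(-n;\gamma;z)$ terminates and the function is a polynomial of degree $n$. Hence $\log^{+}M(r,{}_{1}F_{1}(-n;\gamma;z))=n\log r+O(1)$, and Lemma \ref{L:3} gives $T(r,{}_{1}F_{1}(-n;\gamma;z))=O(\log r)$.

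If $\alpha\notin\mathbb{Z}_{\le 0}$, I would apply the large-$|z|$ asymptotic expansion (\ref{E:asymptotic-1}). Taking $M=N=0$ there, for all $z$ with $-\pi<\arg z<\pi$ and $|z|$ sufficiently large,
\begin{equation*}
\bigl|{}_{1}F_{1}(\alpha;\gamma;z)\bigr|\le C_{1}|z|^{-\Re\alpha}+C_{2}e^{\Re z}|z|^{\Re(\alpha-\gamma)}\le Ce^{r}r^{K},\qquad |z|=r,
\end{equation*}
with constants $C,K$ depending only on $\alpha,\gamma$. Taking $\log^{+}$ and applying Lemma \ref{L:3} yields $T(r,{}_{1}F_{1}(\alpha;\gamma;z))\le r+K\log r+O(1)=O(r)$.

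The main subtlety is that (\ref{E:asymptotic-1}) is stated only on the cut plane $-\pi<\arg z<\pi$ and so does not immediately control $|{}_{1}F_{1}(\alpha;\gamma;z)|$ uniformly on the whole circle $|z|=r$, particularly near the Stokes ray $\arg z=\pm\pi$. I would bypass this either by applying Kummer's transformation ${}_{1}F_{1}(\alpha;\gamma;z)=e^{z}{}_{1}F_{1}(\gamma-\alpha;\gamma;-z)$ to trade a neighbourhood of $\arg z=\pi$ for one of $\arg z=0$, or, most transparently, by estimating the defining series directly: Stirling's formula gives $|(\alpha)_{n}/(\gamma)_{n}|\le Cn^{\Re(\alpha-\gamma)}$ for large $n$, so
\begin{equation*}
M(r,{}_{1}F_{1}(\alpha;\gamma;z))\le \sum_{n=0}^{\infty}\left|\frac{(\alpha)_{n}}{n!(\gamma)_{n}}\right|r^{n}\le C\sum_{n=0}^{\infty}\frac{n^{\Re(\alpha-\gamma)}}{n!}r^{n}=O(e^{r}r^{K}),
\end{equation*}
uniformly in direction, which closes the argument without reliance on sectorial asymptotics.
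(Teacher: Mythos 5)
Your proposal is correct, and your ``most transparent'' fallback is in fact the closest to what the paper actually does: the paper also bounds $m(r,{}_{1}F_{1})\leq\log^{+}\sum_{n\geq 0}\left|\frac{(\alpha)_{n}}{n!(\gamma)_{n}}\right|r^{n}$ and reduces everything to controlling the coefficient ratio $\left|\frac{(\alpha)_{n}}{(\gamma)_{n}}\right|$. The difference is in how that ratio is controlled. The paper runs a three-case elementary argument (comparing $\Re\alpha$ with $\Re\gamma$, choosing an index $j$ beyond which $\left|\frac{\alpha+k}{\gamma+k}\right|$ is $<1$ or $>1$, and in the latter case inserting a factor $\beta^{n}$ with an integer $\beta\geq 2$), which yields $\sum\left|\frac{(\alpha)_{n}}{n!(\gamma)_{n}}\right|r^{n}=O(e^{\beta r})$ and hence $O(r)$ after taking logarithms. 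Your Stirling-type bound $\left|\frac{(\alpha)_{n}}{(\gamma)_{n}}\right|\leq Cn^{\Re(\alpha-\gamma)}$, coming from $\Gamma(n+\alpha)/\Gamma(n+\gamma)\sim n^{\alpha-\gamma}$, is cleaner, avoids the case split entirely, and gives the sharper estimate $M(r)=O(e^{r}r^{K})$ rather than $O(e^{\beta r})$ --- indeed sharper than what the paper's own Case 2 delivers toward its Theorem \ref{T:1'}. The degenerate case $\alpha\in\mathbb{Z}_{\leq 0}$ is handled identically in both. Your first suggested route, via the asymptotic expansion (\ref{E:asymptotic-1}), is the weaker one for exactly the reason you flag: the expansion is sectorial and its error terms are not uniform up to the cut $\arg z=\pm\pi$, so it does not by itself control $M(r)$ on the full circle; since you supply the series estimate as the actual closing argument, the proof stands.
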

\begin{proof}
Since the confluent hypergeometric function
\begin{equation}
{}_{1}F_{1}(\alpha;\gamma;z)=\sum_{n=0}^{\infty}\frac{(\alpha)_{n}}{n!(\gamma)_{n}}z^{n}\ (\gamma\neq 0,-1,-2,\cdots),
\end{equation}
then
\begin{equation}
\begin{split}
&m\left(r,{}_{1}F_{1}(\alpha;\gamma;z)\right)\\
&=\frac{1}{2\pi}\int_{0}^{2\pi}\log^{+}|{}_{1}F_{1}(\alpha,\gamma;re^{i\theta})|d\theta\\
&=\frac{1}{2\pi}\int_{0}^{2\pi}\log^{+}\left|\sum_{n=0}^{\infty}\frac{(\alpha)_{n}}{n!(\gamma)_{n}}\left(re^{i\theta}\right)^{n}\right|d\theta\\
&=\frac{1}{2\pi}\int_{0}^{2\pi}\log^{+}\left|\sum_{n=0}^{\infty}\frac{(\alpha)_{n}}{n!(\gamma)_{n}}r^{n}e^{in\theta}\right|d\theta\\
&\leq \frac{1}{2\pi}\int_{0}^{2\pi}\log^{+}\sum_{n=0}^{\infty}\left|\frac{(\alpha)_{n}}{n!(\gamma)_{n}}r^{n}e^{in\theta}\right|d\theta\\
&=\frac{1}{2\pi}\int_{0}^{2\pi}\log^{+}\sum_{n=0}^{\infty}\left|\frac{(\alpha)_{n}}{n!(\gamma)_{n}}\right|r^{n}d\theta\\
&=\log^{+}\sum_{n=0}^{\infty}\left|\frac{(\alpha)_{n}}{n!(\gamma)_{n}}\right|r^{n}.
\end{split}
\end{equation}
Next, we will distinguish three cases to discuss for $\alpha\not\in \mathbb{Z}_{\leq 0}$.

{\noindent\bf Case 1.} If $\Re \alpha< \Re \gamma$, then we can find a smallest $j\in \mathbb{Z}^{+}$ such that $\Re \alpha+j>0$ and $|\alpha+j|<|\gamma+j|$. Denote
\begin{equation}
C(\alpha,\gamma)=1+\max_{1\leq i\leq j}\left|\frac{(\alpha)_{i}}{(\gamma)_{i}}\right|.
\end{equation}
Thus,
\begin{equation}
\left|\frac{(\alpha)_{n}}{(\gamma)_{n}}\right|\leq C(\alpha,\gamma)
\end{equation}
whenever $0\leq n\leq j$.

Since $\left|\frac{\alpha+k}{\gamma+k}\right|<1$ for $k\geq j$,
we have
\begin{equation}
\left|\frac{(\alpha)_{n}}{(\gamma)_{n}}\right|
=\left|\frac{(\alpha)_{j}}{(\gamma)_{j}}\cdot\frac{(\alpha+j)_{n-j}}{(\gamma+j)_{n-j}}\right|
\leq C(\alpha,\gamma)\cdot \left|\frac{(\alpha+j)_{n-j}}{(\gamma+j)_{n-j}}\right|< C(\alpha,\gamma)
\end{equation}
whenever $j<n$. So
\begin{equation}
\left|\frac{(\alpha)_{n}}{(\gamma)_{n}}\right|\leq C(\alpha,\gamma)
\end{equation}
for $n\geq 0$. It implies that
\begin{equation}
m\left(r,{}_{1}F_{1}(\alpha;\gamma;z)\right)\leq \log^{+} \left(C(\alpha,\gamma)\sum_{n=0}^{\infty}\frac{r^{n}}{n!}\right)
=\log^{+} \left(C(\alpha,\gamma) e^{r}\right)=O(r).
\end{equation}
{\noindent\bf Case 2.} If $\Re \alpha> \Re \gamma$, then we can find a smallest $\widetilde{j}\in \mathbb{Z}^{+}$ such that $\Re \gamma+\widetilde{j}>0$ and $|\alpha+\widetilde{j}|>|\gamma+\widetilde{j}|$. Thus, there exists a positive integer $\beta\geq 2$ such that
$\beta|\gamma+\widetilde{j}|>|\alpha+\widetilde{j}|$. Similarly, we denote
\begin{equation}
\widetilde{C}(\alpha,\gamma)=1+\max_{1\leq i\leq \widetilde{j}}\left|\frac{(\alpha)_{i}}{(\gamma)_{i}}\right|.
\end{equation}
Then
\begin{equation}
\left|\frac{(\alpha)_{n}}{(\gamma)_{n}}\right|\leq \widetilde{C}(\alpha,\gamma)
\end{equation}
whenever $0\leq n\leq \widetilde{j}$.

Note that $\left|\frac{\alpha+k}{\gamma+k}\right|>1$ for $k\geq \widetilde{j}$
and $\left|\frac{\alpha+k}{\gamma+k}\right|$ is decreasing with respect to $k$, owning lower bound $1$.
So we can obtain that
\begin{equation}
\begin{split}
&\left|\frac{(\alpha)_{n}}{(\gamma)_{n}}\right|=\left|\frac{(\alpha)_{\widetilde{j}}}{(\gamma)_{\widetilde{j}}}
\cdot\frac{(\alpha+\widetilde{j})_{n-\widetilde{j}}}{(\gamma+\widetilde{j})_{n-\widetilde{j}}}\right|\\
&\leq \widetilde{C}(\alpha,\gamma)\cdot \left|\frac{(\alpha+\widetilde{j})_{n-\widetilde{j}}}{(\gamma+\widetilde{j})_{n-\widetilde{j}}}\right|\\
&= \widetilde{C}(\alpha,\gamma)\cdot \left|\frac{(\alpha+\widetilde{j})_{n-\widetilde{j}}}{\beta^{n}\cdot(\gamma+\widetilde{j})_{n-\widetilde{j}}}\right|\cdot\beta^{n}\\
&\leq \widetilde{C}(\alpha,\gamma)\cdot\beta^{n}
\end{split}
\end{equation}
whenever $\widetilde{j}<n$. Then
\begin{equation}
\left|\frac{(\alpha)_{n}}{(\gamma)_{n}}\right|\leq \widetilde{C}(\alpha,\gamma)\cdot\beta^{n}
\end{equation}
for $n\geq 0$. Hence,
\begin{equation}
m\left(r,{}_{1}F_{1}(\alpha;\gamma;z)\right)\leq \log^{+} \left(\widetilde{C}(\alpha,\gamma)\sum_{n=0}^{\infty}\frac{(\beta r)^{n}}{n!}\right)
=\log^{+} \left(\widetilde{C}(\alpha,\gamma) e^{\beta r}\right)=O(r).
\end{equation}

{\noindent\bf Case 3.} If $\Re \alpha= \Re \gamma$, then there exists a smallest $\widehat{j}\in \mathbb{Z}^{+}$ such that $\Re \alpha+\widehat{j}=\Re \gamma+\widehat{j}>0$. Next, we can follow Case 1 and Case 2 for $|\alpha+\widehat{j}|\leq|\gamma+\widehat{j}|$ and $|\alpha+\widehat{j}|>|\gamma+\widehat{j}|$ respectively.

Since the confluent hypergeometric function ${}_{1}F_{1}(\alpha;\gamma;z)$ is an entire function, combining the above three cases, we get
\begin{equation}
T\left(r,{}_{1}F_{1}(\alpha;\gamma;z)\right)=O(r).
\end{equation}

Obviously, when $\alpha=-n=0,-1,-2,\cdots$, ${}_{1}F_{1}(-n;\gamma;z)$ is a polynomial of degree $n$, then
\begin{equation}
T(r,{}_{1}F_{1}(-n;\gamma;z))=n\log r+O(1).
\end{equation}

This completes the proof.
\end{proof}

Theorem \ref{T:1} shows two different growth levels for the characteristic functions of degenerated and non-degenerated confluent hypergeometric functions. From Lemma \ref{L:3} and the proof of Theorem  \ref{T:1}, we can get a similar situation occurs in their corresponding maximum modulus functions $M(r, {}_{1}F_{1}(\alpha;\gamma;z))$ immediately.

\begin{theorem}
\label{T:1'}
Let ${}_{1}F_{1}(\alpha;\gamma;z)$ be a confluent hypergeometric function,
where $\gamma\neq 0,-1, -2, \cdots$. Then $M(r,{}_{1}F_{1}(\alpha;\gamma;z))=O(e^r)$ for $\alpha\not\in \mathbb{Z}_{\leq 0}$, $M\left(r,{}_{1}F_{1}(\alpha;\gamma;z)\right)=O(r^{-\alpha})$ for $\alpha\in \mathbb{Z}_{\leq 0}$.
\end{theorem}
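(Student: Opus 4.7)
The plan is to transport the coefficient estimates from the proof of Theorem \ref{T:1} onto the pointwise scale, or equivalently to invoke Lemma \ref{L:3} directly; either route is essentially routine.

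For the non-degenerate case $\alpha \notin \mathbb{Z}_{\leq 0}$, I would revisit the same three-case split ($\Re\alpha < \Re\gamma$, $\Re\alpha > \Re\gamma$, $\Re\alpha = \Re\gamma$) carried out in the proof of Theorem \ref{T:1}. Each subcase produced a uniform bound of the form $|(\alpha)_n / (\gamma)_n| \leq K \beta^n$ for explicit constants $K = K(\alpha,\gamma) > 0$ and $\beta = \beta(\alpha,\gamma) \geq 1$, via the elementary fact that the ratio $|(\alpha+k)/(\gamma+k)|$ eventually stays under a fixed threshold. Substituting this majorant into the defining power series on $|z| = r$ yields
\begin{equation*}
|{}_{1}F_{1}(\alpha;\gamma;z)| \leq \sum_{n=0}^\infty \left|\frac{(\alpha)_{n}}{n!(\gamma)_{n}}\right| r^n \leq K \sum_{n=0}^\infty \frac{(\beta r)^n}{n!} = K e^{\beta r},
\end{equation*}
so taking the maximum over $|z|=r$ delivers exponential-type-one growth $M(r,{}_{1}F_{1}(\alpha;\gamma;z)) = O(e^r)$. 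A shorter alternative bypasses the coefficient work altogether: since Theorem \ref{T:1} already furnishes $T(r,f) = O(r)$, the inequality $\log^{+} M(r,f) \leq \frac{R+r}{R-r} T(R,f)$ of Lemma \ref{L:3} with the choice $R = 2r$ gives $\log^{+} M(r,f) \leq 3\, T(2r, f) = O(r)$ at once.

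For the degenerate case $\alpha = -n \in \mathbb{Z}_{\leq 0}$, the confluent hypergeometric function reduces to a polynomial of degree exactly $n = -\alpha$, as recorded in the introduction. Its modulus on $|z| = r$ is dominated by its leading term, giving $M(r,{}_{1}F_{1}(-n;\gamma;z)) = O(r^n) = O(r^{-\alpha})$ with no further work.

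The only delicate point worth flagging is the interpretation of $O(e^r)$ in the non-degenerate case: to literally match the constant $1$ in the exponent, rather than an arbitrary $\beta > 1$, one must exploit the limit $|(\alpha+k)/(\gamma+k)| \to 1$ as $k \to \infty$ in order to push $\beta$ arbitrarily close to $1$. For the purposes of exponential order this is immaterial, and the authors are presumably using $O(e^r)$ loosely to mean growth of exponential type one; the argument above supports that reading without further modification, which is why the result is described as following ``immediately'' from Lemma \ref{L:3} and the proof of Theorem \ref{T:1}.
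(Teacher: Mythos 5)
Your proposal matches the paper's own treatment: the paper gives no separate proof of this theorem, merely remarking that it follows ``immediately'' from Lemma \ref{L:3} and the coefficient bounds $\lvert(\alpha)_n/(\gamma)_n\rvert \leq K\beta^n$ already established in the proof of Theorem \ref{T:1}, which is exactly the substitution you carry out. Your closing caveat is well taken --- in Case 2 the argument (the paper's as much as yours) only yields $O(e^{\beta r})$ with $\beta \geq 2$, or $O(e^{(1+\epsilon)r})$ after refining the threshold, and indeed the asymptotic (\ref{E:asymptotic-2}) shows $M(r)\sim C e^{r} r^{\Re(\alpha-\gamma)}$, so the literal bound $O(e^{r})$ fails when $\Re\alpha>\Re\gamma$ and must be read as ``exponential type one,'' exactly as you say.
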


The following theorem shows that, making use of the notion of order, these two growth levels can be distinguished by two different numbers simply.

\begin{theorem}
\label{T:2}
Let ${}_{1}F_{1}(\alpha;\gamma;z)$ be a confluent hypergeometric function,
where $\gamma\neq 0,-1, -2, \cdots$. Then $\sigma\left({}_{1}F_{1}(\alpha;\gamma;z)\right)=1$ for $\alpha\not\in \mathbb{Z}_{\leq 0}$,
$\sigma\left({}_{1}F_{1}(\alpha;\gamma;z)\right)=0$ for $\alpha\in \mathbb{Z}_{\leq 0}$, where $\sigma\left({}_{1}F_{1}(\alpha;\gamma;z)\right)$ is the order of ${}_{1}F_{1}(\alpha;\gamma;z)$.
\end{theorem}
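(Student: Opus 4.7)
My plan is to split the theorem into the degenerate and non-degenerate cases and, in each, match an upper bound coming from Theorem~\ref{T:1} against a lower bound coming from either direct polynomial inspection or the known large-$z$ asymptotic. The degenerate case $\alpha\in\mathbb{Z}_{\leq 0}$ is immediate: since ${}_{1}F_{1}(-n;\gamma;z)$ is a polynomial of degree $n$, Theorem~\ref{T:1} already yields $T(r,{}_{1}F_{1}(-n;\gamma;z))=n\log r+O(1)$, so
$$\sigma=\limsup_{r\to\infty}\frac{\log^{+}T(r,{}_{1}F_{1}(-n;\gamma;z))}{\log r}=0.$$

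For $\alpha\notin\mathbb{Z}_{\leq 0}$ the upper bound $\sigma\leq 1$ follows at once from Theorem~\ref{T:1}, which supplies $T(r,f)=O(r)$, hence $\log^{+}T(r,f)=\log r+O(1)$. The substantive step is the matching lower bound $\sigma\geq 1$, for which I would invoke the asymptotic \eqref{E:asymptotic-2}: as $\Re z\to\infty$,
$${}_{1}F_{1}(\alpha;\gamma;z)=\frac{\Gamma(\gamma)}{\Gamma(\alpha)}e^{z}z^{\alpha-\gamma}\bigl[1+O(|z|^{-1})\bigr].$$
The hypothesis $\alpha\notin\mathbb{Z}_{\leq 0}$ is precisely what guarantees $1/\Gamma(\alpha)\neq 0$, so the leading factor is nonzero. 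Specializing to $z=r>0$ then yields, for all sufficiently large $r$,
$$|{}_{1}F_{1}(\alpha;\gamma;r)|\geq \tfrac{1}{2}\left|\tfrac{\Gamma(\gamma)}{\Gamma(\alpha)}\right|e^{r}r^{\Re(\alpha-\gamma)},$$
and therefore $\log M(r,{}_{1}F_{1}(\alpha;\gamma;z))\geq r+\Re(\alpha-\gamma)\log r+O(1)$.

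To convert this into a lower bound on $T(r,f)$, I would apply Lemma~\ref{L:3} with $R=2r$, giving $\log M(r,f)\leq 3\,T(2r,f)$. Consequently $T(r,f)\geq c\,r$ for some $c>0$ and all large $r$, which gives $\sigma(f)\geq 1$, and combined with the upper bound yields $\sigma(f)=1$. The only real obstacle is the verification that the non-vanishing of $1/\Gamma(\alpha)$ under the stated hypothesis genuinely controls the leading term of \eqref{E:asymptotic-2}; everything else is a routine $M(r)$-versus-$T(r)$ comparison. As an alternative and purely ``Wiman--Valiron'' route, one could invoke Lemma~\ref{L:1} directly: the coefficient ratio
$$\left|\frac{a_{n+1}}{a_{n}}\right|=\frac{|\alpha+n|}{(n+1)|\gamma+n|}\sim \frac{1}{n}$$
for ${}_{1}F_{1}(\alpha;\gamma;z)=\sum a_{n}z^{n}$ forces $\nu_{f}(r)\sim r$, so that $\limsup_{r\to\infty}\log\nu_{f}(r)/\log r=1$; and in the degenerate case $\nu_{f}(r)$ is eventually constant, giving $\sigma=0$.
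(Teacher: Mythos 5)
Your proposal is correct, but the route you take for the substantive step (the lower bound $\sigma\geq 1$ when $\alpha\not\in\mathbb{Z}_{\leq 0}$) is genuinely different from the paper's. You quote the large-$z$ asymptotic \eqref{E:asymptotic-2}, observe that $\alpha\not\in\mathbb{Z}_{\leq 0}$ and $\gamma\not\in\mathbb{Z}_{\leq 0}$ make the constant $\Gamma(\gamma)/\Gamma(\alpha)$ finite and nonzero, restrict to the positive real axis to get $\log M(r,f)\geq r+O(\log r)$, and then convert this into $T(r,f)\geq cr$ via Lemma~\ref{L:3} with $R=2r$. The paper instead works entirely at the level of the Taylor coefficients: it splits into three cases according to the relation between $\Re\alpha$ and $\Re\gamma$, derives in each case a lower bound of the form $\left|\frac{(\alpha)_{n}}{(\gamma)_{n}}\right|\geq D\,\delta^{n}$ for an explicit $\delta>0$, uses this to dominate the maximum term of an auxiliary function such as $D e^{\delta z}-D+1$ by the maximum term of ${}_{1}F_{1}(\alpha;\gamma;z)$, and then invokes the Wiman--Valiron characterization of order (Lemma~\ref{L:1}) together with Lemma~\ref{L:4}. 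Your argument is shorter and avoids the case analysis, but it outsources the hard content to the asymptotic expansion, which the paper only cites from the literature; the paper's argument is self-contained modulo the standard Wiman--Valiron lemmas and makes explicit exactly where the hypothesis $\alpha\not\in\mathbb{Z}_{\leq 0}$ enters the coefficient estimates. Your alternative sketch via the coefficient ratio $\left|a_{n+1}/a_{n}\right|\sim 1/n$ forcing $\nu_{f}(r)\sim r$ is in spirit much closer to what the paper actually does, though as written it is only a heuristic and would need the same kind of quantitative two-sided coefficient bounds the paper establishes. The degenerate case and the upper bound are handled identically in both arguments.
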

\begin{proof}
From the proof of Theorem \ref{T:1}, by the definition of order, we can get $\sigma\left({}_{1}F_{1}(\alpha;\gamma;z)\right)\leq1$ for $\alpha\not\in \mathbb{Z}_{\leq 0}$.

In order to verify $\sigma\left({}_{1}F_{1}(\alpha;\gamma;z)\right)\geq 1$, we will consider three different cases for $\alpha\not\in \mathbb{Z}_{\leq 0}$.

{\noindent\bf Case 1.} If $\Re \alpha< \Re \gamma$, then we can find a smallest $j\in \mathbb{Z}^{+}$ such that $\Re \alpha+j>0$ and $|\alpha+j|<|\gamma+j|$. Denote
\begin{equation}
D(\alpha,\gamma)=\min_{1\leq i\leq j}\left|\frac{(\alpha)_{i}}{(\gamma)_{i}}\right|.
\end{equation}
Thus,
\begin{equation}
\left|\frac{(\alpha)_{n}}{(\gamma)_{n}}\right|\geq D(\alpha,\gamma)>0
\end{equation}
whenever $1\leq n\leq j$.

Note that $\left|\frac{\alpha+k}{\gamma+k}\right|<1$ for $k\geq j$
and $\left|\frac{\alpha+k}{\gamma+k}\right|$ is increasing with respect to $k$, having upper bound $1$.
We define
\begin{equation}
\delta:=\left|\frac{\alpha+j}{\gamma+j}\right|,
\end{equation}
then
\begin{equation}
\left|\frac{(\alpha)_{n}}{(\gamma)_{n}}\right|
=\left|\frac{(\alpha)_{j}}{(\gamma)_{j}}\cdot\frac{(\alpha+j)_{n-j}}{(\gamma+j)_{n-j}}\right|
\geq D(\alpha,\gamma)\cdot \left|\frac{(\alpha+j)_{n-j}}{(\gamma+j)_{n-j}}\right|
\geq D(\alpha,\gamma)\cdot \delta^{n-j}\geq D(\alpha,\gamma)\cdot \delta^{n}
\end{equation}
whenever $j<n$. Hence,
\begin{equation}
\left|\frac{(\alpha)_{n}}{(\gamma)_{n}}\right|\geq D(\alpha,\gamma)\cdot \delta^{n}
\end{equation}
for $n\geq 1$. It implies that
\begin{equation}
\left|\frac{(\alpha)_{n}}{n!(\gamma)_{n}}\right|r^{n}\geq D(\alpha,\gamma)\cdot \frac{(\delta r)^{n}}{n!}
\end{equation}
for $n\geq 1$.

Set $f(z)=D(\alpha,\gamma)e^{\delta z}-D(\alpha,\gamma)+1=1+D(\alpha,\gamma)\cdot\sum_{n=1}^{\infty}\frac{(\delta z)^{n}}{n!}$, there exists $r_{0}>0$ such that
\begin{equation}
D(\alpha,\gamma)\cdot\delta r>1
\end{equation}
whenever $r>r_{0}$, then
\begin{equation}
\begin{split}
&\mu_{f(z)}(r)\\
&=\max_{n\geq 1}\left\{D(\alpha,\gamma)\cdot\frac{\delta^{n}}{n!}\cdot r^{n}\right\}\\
&=D(\alpha,\gamma)\cdot\frac{\delta^{\nu_{f(z)}(r)}}{\nu_{f(z)}(r)!}\cdot r^{\nu_{f(z)}(r)}\\
&\leq \left|\frac{(\alpha)_{\nu_{f(z)}(r)}}{\nu_{f(z)}(r)!(\gamma)_{\nu_{f(z)}(r)}}\right|r^{\nu_{f(z)}(r)}\\
&\leq \max_{n\geq 0}\left\{\left|\frac{(\alpha)_{n}}{n!(\gamma)_{n}}\right|r^{n}\right\}\\
&=\mu_{{}_{1}F_{1}(\alpha;\gamma;z)}(r)
\end{split}
\end{equation}
whenever $r>r_{0}$, where $\nu_{f(z)}(r)=\max\left\{m,\mu_{f(z)}(r)=D(\alpha,\gamma)\cdot\frac{\delta^{m}}{m!}\cdot r^{m}\right\}$. Hence, by Lemma \ref{L:1} and Lemma \ref{L:4}, we have
\begin{equation}
\begin{split}
&\sigma({}_{1}F_{1}(\alpha;\gamma;z))\\
&=\lim\sup_{r\rightarrow\infty}\frac{\log^{+}\log^{+}\mu_{{}_{1}F_{1}(\alpha;\gamma;z)}(r)}{\log r}\\
&\geq \lim\sup_{r\rightarrow\infty}\frac{\log^{+}\log^{+}\mu_{f(z)}(r)}{\log r}\\
&=\sigma(f(z))\\
&=1.
\end{split}
\end{equation}
{\noindent\bf Case 2.} If $\Re \alpha> \Re \gamma$, then we can find a smallest $\widetilde{j}\in \mathbb{Z}^{+}$ such that $\Re \gamma+\widetilde{j}>0$ and $|\alpha+\widetilde{j}|>|\gamma+\widetilde{j}|$. Similarly, we denote
\begin{equation}
\widetilde{D}(\alpha,\gamma)=\min_{1\leq i\leq \widetilde{j}}\left|\frac{(\alpha)_{i}}{(\gamma)_{i}}\right|.
\end{equation}
Then
\begin{equation}
\left|\frac{(\alpha)_{n}}{(\gamma)_{n}}\right|\geq \widetilde{D}(\alpha,\gamma)
\end{equation}
whenever $1\leq n\leq \widetilde{j}$.

Since $\left|\frac{\alpha+k}{\gamma+k}\right|>1$ for $k\geq \widetilde{j}$, we have
\begin{equation}
\left|\frac{(\alpha)_{n}}{(\gamma)_{n}}\right|=\left|\frac{(\alpha)_{\widetilde{j}}}{(\gamma)_{\widetilde{j}}}
\cdot\frac{(\alpha+\widetilde{j})_{n-\widetilde{j}}}{(\gamma+\widetilde{j})_{n-\widetilde{j}}}\right|
\geq \widetilde{D}(\alpha,\gamma)\cdot\left|\frac{(\alpha+\widetilde{j})_{n-\widetilde{j}}}{(\gamma+\widetilde{j})_{n-\widetilde{j}}}\right|
>\widetilde{D}(\alpha,\gamma)
\end{equation}
whenever $\widetilde{j}<n$. So
\begin{equation}
\left|\frac{(\alpha)_{n}}{(\gamma)_{n}}\right|\geq \widetilde{D}(\alpha,\gamma)
\end{equation}
for $n\geq 1$. It implies that
\begin{equation}
\left|\frac{(\alpha)_{n}}{n!(\gamma)_{n}}\right|r^{n}\geq \frac{\widetilde{D}(\alpha,\gamma)}{n!}\cdot r^{n}
\end{equation}
for $n\geq 1$.

Set $g(z)=\widetilde{D}(\alpha,\gamma)e^{z}-\widetilde{D}(\alpha,\gamma)+1=1+\widetilde{D}(\alpha,\gamma)\cdot\sum_{n=1}^{\infty}
\frac{z^{n}}{n!}$, there exists $\widetilde{r}_{0}>0$ such that
\begin{equation}
\widetilde{D}(\alpha,\gamma)\cdot r>1
\end{equation}
whenever $r>\widetilde{r}_{0}$, then
\begin{equation}
\begin{split}
&\mu_{g(z)}(r)\\
&=\max_{n\geq 1}\left\{\frac{\widetilde{D}(\alpha,\gamma)}{n!}\cdot r^{n}\right\}\\
&=\widetilde{D}(\alpha,\gamma)\cdot\frac{\delta^{\nu_{g(z)}(r)}}{\nu_{g(z)}(r)!}\cdot r^{\nu_{g(z)}(r)}\\
&\leq \left|\frac{(\alpha)_{\nu_{g(z)}(r)}}{\nu_{g(z)}(r)!(\gamma)_{\nu_{g(z)}(r)}}\right|r^{\nu_{g(z)}(r)}\\
&\leq \max_{n\geq 0}\left\{\left|\frac{(\alpha)_{n}}{n!(\gamma)_{n}}\right|r^{n}\right\}\\
&=\mu_{{}_{1}F_{1}(\alpha;\gamma;z)}(r)
\end{split}
\end{equation}
whenever $r>\widetilde{r}_{0}$, where $\nu_{g(z)}(r)=\max\left\{m,\mu_{g(z)}(r)=\frac{\widetilde{D}(\alpha,\gamma)}{m!}\cdot r^{m}\right\}$. Hence,
by Lemma \ref{L:1} and Lemma \ref{L:4}, we have
\begin{equation}
\begin{split}
&\sigma({}_{1}F_{1}(\alpha;\gamma;z))\\
&=\lim\sup_{r\rightarrow\infty}\frac{\log^{+}\log^{+}\mu_{{}_{1}F_{1}(\alpha,\gamma;z)}(r)}{\log r}\\
&\geq \lim\sup_{r\rightarrow\infty}\frac{\log^{+}\log^{+}\mu_{g(z)}(r)}{\log r}\\
&=\sigma(g(z))\\
&=1.
\end{split}
\end{equation}
{\noindent\bf Case 3.} If $\Re \alpha= \Re \gamma$, then there exists a smallest $\widehat{j}\in \mathbb{Z}^{+}$ such that $\Re \alpha+\widehat{j}=\Re \gamma+\widehat{j}>0$. Next, we can follow Case 1 and Case 2 for $|\alpha+\widehat{j}|\leq|\gamma+\widehat{j}|$ and $|\alpha+\widehat{j}|>|\gamma+\widehat{j}|$ respectively.

It is easy to see that $\sigma({}_{1}F_{1}(\alpha;\gamma;z))=0$ when $\alpha\in \mathbb{Z}_{\leq 0}$.

This completes the proof.
\end{proof}
\medskip

Let $f$ be a meromorphic function, it is well-known that the following logarithmic derivative estimate
\begin{equation}
m\left(r, \frac{f'(z)}{f(z)}\right)=O(\log rT(r, f))=S(r, f),
\end{equation}
holds outside a possible set of finite linear measure, where the notation $S(r,f)$ means that the expression is of
$o(T(r,f))$. It shows that the proximity function of the logarithmic derivative of $f(z)$ grows much slower than the characteristic function of $f(z)$. By Theorem \ref{T:2}, we have known that ${}_{1}F_{1}(\alpha;\gamma;z)$ is of finite order,
then the growth of $m\left(r, \frac{{}_{1}F_{1}'(\alpha;\gamma;z)}{{}_{1}F_{1}(\alpha;\gamma;z)}\right)$ is not exceeded $O(\log r)$. Next, we will provide a more precise estimation for $m\left(r, \frac{{}_{1}F_{1}'(\alpha;\gamma;z)}{{}_{1}F_{1}(\alpha;\gamma;z)}\right)$.

\begin{theorem}
\label{T:3}
Let ${}_{1}F_{1}(\alpha;\gamma;z)$ be a confluent hypergeometric function,
where $\gamma\neq 0,-1, -2, \cdots$. Then $m\left(r,\frac{{}_{1}F_{1}'(\alpha;\gamma;z)}{{}_{1}F_{1}(\alpha;\gamma;z)}\right)=O(1)$, outside a possible set of finite logarithmic measure.
\end{theorem}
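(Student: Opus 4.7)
My plan follows the dichotomy of Theorem~\ref{T:2}. When $\alpha\in\mathbb{Z}_{\le 0}$, the function ${}_1F_1(\alpha;\gamma;z)$ is a polynomial of degree $-\alpha$, so its logarithmic derivative is rational and behaves like $(-\alpha)/z+O(|z|^{-2})$ as $|z|\to\infty$. Hence $\log^{+}\bigl|{}_1F_1'/{}_1F_1\bigr|$ vanishes on every sufficiently large circle and $m(r,{}_1F_1'/{}_1F_1)=O(1)$ without any exceptional set.

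For $\alpha\notin\mathbb{Z}_{\le 0}$, Theorem~\ref{T:2} provides that $f:={}_1F_1(\alpha;\gamma;z)$ is transcendental entire of order exactly $1$, and the plan is to apply Lemma~\ref{L:2}. The preparation is a sharp determination of the central index. Writing $f=\sum a_{n} z^{n}$ with $a_{n}=(\alpha)_{n}/(n!(\gamma)_{n})$, the ratio
$$\left|\frac{a_{n+1}}{a_{n}}\right|=\frac{|\alpha+n|}{(n+1)\,|\gamma+n|}\sim\frac{1}{n}\qquad(n\to\infty)$$
locates the maximal term of the series at $n\asymp r$, so $\nu_{f}(r)=r\bigl(1+o(1)\bigr)$. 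Lemma~\ref{L:2} then furnishes a set $F\subset\mathbb{R}_{+}$ of finite logarithmic measure such that, for $r\notin F$ and any $z$ on $|z|=r$ with $|f(z)|>M(r,f)\,\nu_{f}(r)^{-1/4+\delta}$,
$$\frac{f'(z)}{f(z)}=\frac{\nu_{f}(r)}{z}\bigl(1+o(1)\bigr)=1+o(1),$$
so $\log^{+}\bigl|f'/f\bigr|=O(1)$ at every such $z$.

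The main obstacle, and the part requiring the most care, is controlling the contribution to $m(r,f'/f)$ from the complementary angular set $E(r)=\{\theta:|f(re^{i\theta})|\le M(r,f)\,\nu_{f}(r)^{-1/4+\delta}\}$, where Lemma~\ref{L:2} does not apply directly. Here I would invoke the asymptotic expansion (\ref{E:asymptotic-1}) to write $f=A(z)+B(z)$ with $A'/A=-\alpha/z+O(|z|^{-2})$ and $B'/B=1+(\alpha-\gamma)/z+O(|z|^{-2})$; off the thin transition curve $|A|\asymp|B|$ one term dominates and $|f'/f|=O(1)$ directly, so $E(r)$ is confined near this curve, concentrated at the actual zeros of $f$. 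Near a simple zero $z_{0}$ one has $|f'/f|\sim 1/|z-z_{0}|$, contributing only a bounded amount to $\int\log^{+}|f'/f|\,d\theta$ per zero, and since $\sigma(f)=1$ forces $n(r,1/f)=O(r)$ while the zeros lie in a narrow angular strip near $\theta=\pm\pi/2$, the cumulative contribution of all zeros near $|z|=r$ is $O(1)$. Combining this estimate on $E(r)$ with the Wiman--Valiron bound on the good angular set yields the desired $m(r,f'/f)=O(1)$ for $r\notin F$, sharpening the generic $O(\log r)$ bound given by the Logarithmic Derivative Lemma.
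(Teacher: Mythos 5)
Your argument has the same core as the paper's: both rest on Lemma~\ref{L:2} together with the estimate $\nu_{f}(r)=O(r)$ for $f={}_{1}F_{1}(\alpha;\gamma;z)$, which yields $f'(z)/f(z)=\frac{\nu_{f}(r)}{z}(1+o(1))=O(1)$ at the Wiman--Valiron points, for $r$ outside a set of finite logarithmic measure. The only real difference in that part is how the central index is bounded: the paper substitutes the Wiman--Valiron expressions for $f''/f$ and $f'/f$ into the differential equation (\ref{E:4}) and solves the resulting quadratic for $\nu_{f}(r)$, whereas you read $\nu_{f}(r)\sim r$ directly off the coefficient ratios; both are legitimate. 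You are also right --- and more careful than the paper --- that Lemma~\ref{L:2} controls $f'/f$ only at points $z$ on $|z|=r$ with $|f(z)|>M(r,f)\,\nu_{f}(r)^{-1/4+\delta}$, whereas $m(r,f'/f)$ averages $\log^{+}|f'/f|$ over the entire circle; the paper jumps from the pointwise estimate to the integral bound with no justification, and that is precisely the step you isolate.

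However, your handling of the complementary angular set does not close as written. First, $\sigma(f)=1$ gives only $n(r,1/f)=O(r^{1+\epsilon})$ (this is all the paper's own zero-distribution theorem in Section~4 extracts from the order), not $O(r)$; the linear count requires the explicit asymptotics of the zeros of ${}_{1}F_{1}$, which cluster near $\pm 2n\pi i$. Second, the bookkeeping is inconsistent: if each zero contributes ``a bounded amount'' to $\int_{0}^{2\pi}\log^{+}|f'/f|\,d\theta$ and there are $O(r)$ of them, the total is $O(r)$, not $O(1)$. What is actually needed is that (i) only zeros within distance $O(1)$ of the circle $|z|=r$ can make $\log^{+}|f'/f|$ large there, and there are only $O(1)$ such zeros because consecutive zeros of ${}_{1}F_{1}$ are separated by roughly $2\pi$ in modulus; and (ii) each such zero contributes only $O\bigl((\log r)/r\bigr)$ to the angular integral, since $\int\log^{+}\frac{1}{|re^{i\theta}-z_{0}|}\,d\theta$ over a short arc is of that order. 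With these two quantitative inputs your decomposition $f=A+B$ does deliver $m(r,f'/f)=O(1)$, but neither follows from the order alone, so the decisive estimate is still missing from your write-up (as, indeed, it is from the paper's).
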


\begin{proof}
Note that ${}_{1}F_{1}(\alpha;\gamma;z)$ is an entire function, by Lemma \ref{L:2}, we have
\begin{equation}
\frac{{}_{1}F_{1}''(\alpha;\gamma;z)}{{}_{1}F_{1}(\alpha;\gamma;z)}=\left(\frac{\nu_{{}_{1}F_{1}(\alpha;\gamma;z)}(r)}{z}\right)^{2}(1+o(1))
\end{equation}
and
\begin{equation}
\frac{{}_{1}F_{1}'(\alpha;\gamma;z)}{{}_{1}F_{1}(\alpha;\gamma;z)}=\frac{\nu_{{}_{1}F_{1}(\alpha;\gamma;z)}(r)}{z}(1+o(1))
\end{equation}
holding for $r=|z|\not\in E$, where $E\subset \mathbb{R}_{+}$ is a set of finite logarithmic measure, and $z$ is chosen such that $|{}_{1}F_{1}(\alpha;\gamma;z)|=M(r,{}_{1}F_{1}(\alpha;\gamma;z)):=\max_{|z|=r}\left|{}_{1}F_{1}(\alpha;\gamma;z)\right|$.

Since ${}_{1}F_{1}(\alpha;\gamma;z)$ is a solution of (\ref{E:4}) around the origin, we can deduce
\begin{equation}
\left(\nu_{{}_{1}F_{1}(\alpha;\gamma;z)}(r)\right)^{2}(1+o(1))+(\gamma-z)\nu_{{}_{1}F_{1}(\alpha;\gamma;z)}(r)(1+o(1))-\alpha z=0.
\end{equation}

Then
\begin{equation}
\nu_{{}_{1}F_{1}(\alpha;\gamma;z)}(r)\sim\left|\frac{-(\gamma-z)\pm\sqrt{(\gamma-z)^{2}+4\alpha}}{2}\right|\leq ar,
\end{equation}
$r\not\in E$, where $a$ is a positive real constant.

It implies that
\begin{equation}
m\left(r,\frac{{}_{1}F_{1}'(\alpha;\gamma;z)}{{}_{1}F_{1}(\alpha;\gamma;z)}\right)=O(1),
\end{equation}
outside a possible set of finite logarithmic measure.

This completes the proof.
\end{proof}

\medskip
\section{Zeros distribution of ${}_{1}F_{1}(\alpha;\gamma;z)$}
Many problems in mathematical physics can be solved with the help of the distribution of zeros of confluent hypergeometric functions. If $\alpha\not\in \mathbb{Z}_{\leq 0}$ and $\gamma-\alpha\not\in \mathbb{Z}_{\leq 0}$, then ${}_{1}F_{1}(\alpha;\gamma;z)$ has infinitely many zeros in $\mathbb{C}$. Naturally, it is desirable to explore the distribution of these zeros.
These are many known results about it for real parameters (see \cite{Slater}).

When $\alpha$, $\gamma$, $z\in \mathbb{R}$, since
\begin{equation}
{}_{1}F_{1}(\alpha;\gamma;z)=\frac{\Gamma(\gamma)}{\Gamma(\alpha)}e^{z}z^{\alpha-\gamma}\left[1+O(|z|^{-1})\right]\neq 0
\end{equation}
as $z\rightarrow +\infty$, and
\begin{equation}
{}_{1}F_{1}(\alpha;\gamma;z)=\frac{\Gamma(\gamma)}{\Gamma(\gamma-\alpha)}(-z)^{-\alpha}\left[1+O(|z|^{-1})\right]\neq 0
\end{equation}
as $z\rightarrow -\infty$, then ${}_{1}F_{1}(\alpha;\gamma;z)$ has finitely many real zeros. Specifically, let $n^{+}(\alpha,\gamma)$ be the number of positive zeros, then
\begin{equation}
n^{+}(\alpha,\gamma)=\lceil-\alpha\rceil
\end{equation}
when $\alpha<0$ and $\gamma\geq 0$,
\begin{equation}
n^{+}(\alpha,\gamma)=0
\end{equation}
when $\alpha\geq 0$ and $\gamma\geq 0$,
\begin{equation}
n^{+}(\alpha,\gamma)=1
\end{equation}
when $\alpha\geq 0$ and $-1< \gamma <0$,
\begin{equation}
n^{+}(\alpha,\gamma)=\left\lfloor-\frac{\gamma}{2}\right\rfloor-\left\lfloor-\frac{\gamma+1}{2}\right\rfloor
\end{equation}
when $\alpha\geq 0$ and $\gamma\leq 1$,
\begin{equation}
n^{+}(\alpha,\gamma)=\left\lceil-\alpha\right\rceil-\left\lceil-\gamma\right\rceil
\end{equation}
when $\lceil-\alpha\rceil\geq\lceil-\gamma\rceil$, $\alpha< 0$ and $\gamma< 0$,
\begin{equation}
n^{+}(\alpha,\gamma)=\left\lfloor\frac{1}{2}(\lceil-\gamma\rceil-\lceil-\alpha\rceil+1)\right\rfloor
-\left\lfloor\frac{1}{2}(\lceil-\gamma\rceil-\lceil-\alpha\rceil)\right\rfloor
\end{equation}
when $\lceil-\gamma\rceil>\lceil-\alpha\rceil>0$. Here, the notations $\lceil x\rceil$ and $\lfloor x \rfloor$ stand for the integer such that $x\leq \lceil x \rceil<x+1$, and that for which $x-1<\lfloor x \rfloor \leq x$ respectively, where $x$ is real. Besides, the number of negative zeros $n^{-}(\alpha,\gamma)$ is given by
\begin{equation}
n^{-}(\alpha,\gamma)=n^{+}(\gamma-\alpha,\gamma).
\end{equation}
Definitely, we want to find a 'measure' for the quantity expression of complex zeros. Next, we will use the 'language' of value distribution theory to do it.

\begin{definition}
Let $f(z)$ be transcendental meromorphic function, whose nonzero zeros are $z_{1}$, $z_{2}$, $\cdots$, $z_{n}$, $\cdots$, counting multiplicities. Let $|z_{n}|=r_{n}$, then $r_{1}\leq r_{2} \leq \cdots \leq r_{n}\leq \cdots$.
We define the infimum of positive number $\tau$ for which $\sum r_{n}^{-\tau}$ converges as the exponent of convergence of zero-sequence, and write it as $\lambda(f)$.
\end{definition}
Let $n(r)$ be the number of nonzero zeros of $f(z)$ in $\overline{D(0,r)}:=\{z:|z|\leq r\}$, as counting functions defined in Section $1$, one can denote
\begin{equation}
N(r)=\int_{0}^{r}\frac{n(t)}{t}dt.
\end{equation}
It is easy to check that
\begin{equation}
n\left(r,\frac{1}{f}\right)=n(r)+n\left(0,\frac{1}{f}\right)
\end{equation}
and
\begin{equation}
N\left(r,\frac{1}{f}\right)=N(r)+n\left(0,\frac{1}{f}\right)\log r.
\end{equation}

\begin{lemma}[\cite{Hayman}]
\label{L:5}
Let $f(z)$ be transcendental meromorphic function, then
\begin{equation}
\lim\sup_{r\rightarrow\infty}\frac{\log^{+}N(r)}{\log r}=\lim\sup_{r\rightarrow\infty}\frac{\log^{+}n(r)}{\log r}=\lambda(f),
\end{equation}
and $\lambda(f)\leq \sigma(f)$.
\end{lemma}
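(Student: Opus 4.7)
The plan is to prove the assertion in three stages: (a) establish the equality of the two limsups involving $n(r)$ and $N(r)$; (b) identify this common value with the exponent of convergence $\lambda(f)$; and (c) derive the bound $\lambda(f)\le\sigma(f)$ from Nevanlinna's First Main Theorem. None of the three stages depends on $f$ beyond its being transcendental meromorphic, so I would proceed stage by stage without using any special structure of $f$.

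For stage (a), the key point is that $n(t)$ is a nondecreasing step function and $N(r)$ is its logarithmic integral. Restricting the integral defining $N(er)-N(r)$ to the interval $[r,er]$ and using $n(t)\ge n(r)$ there yields $n(r)\le N(er)$; conversely, using $n(t)\le n(r)$ on $[r_1,r]$ (where $r_1$ is the modulus of the first nonzero zero) yields $N(r)\le n(r)\log r+O(1)$ for large $r$. Taking logarithms, dividing by $\log r$, and using $\log(er)/\log r\to 1$ gives the equality of the two limsups; call their common value $\Lambda$.

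For stage (b), I would compare the series $\sum r_n^{-\tau}$ with the Stieltjes integral $\int^\infty r^{-\tau}\,dn(r)$ by a standard integration by parts. If $\tau>\Lambda$, pick $\tau'\in(\Lambda,\tau)$; then $n(r)\le r^{\tau'}$ eventually, and integrating $\int^\infty n(r)r^{-\tau-1}\,dr$ gives convergence of $\sum r_n^{-\tau}$, hence $\lambda(f)\le\Lambda$. For the reverse, if $\tau<\Lambda$, pick a sequence $r_k\to\infty$ with $n(r_k)\ge r_k^\tau$; grouping the first $n(r_k)$ zeros yields $\sum_{j\le n(r_k)}r_j^{-\tau}\ge n(r_k)r_k^{-\tau}\ge 1$ for every $k$, forcing divergence and hence $\lambda(f)\ge\Lambda$.

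For stage (c), Nevanlinna's First Main Theorem gives $N(r,1/f)\le T(r,1/f)=T(r,f)+O(1)$, while $N(r,1/f)=N(r)+n(0,1/f)\log r$ differs from $N(r)$ by at most $O(\log r)$. Dividing $\log^+ N(r)\le \log^+ T(r,f)+O(\log\log r)$ by $\log r$ and passing to $\limsup$ yields $\lambda(f)=\Lambda\le\sigma(f)$. The only piece requiring any care is the integration-by-parts step in stage (b): one must match the tail of $\int^\infty n(r)r^{-\tau-1}\,dr$ with the tail of the series without losing the $\varepsilon$ margin of convergence, and handle the boundary term $r^{-\tau}n(r)$ at infinity, which vanishes precisely because $\tau>\tau'>\Lambda$. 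Once this bookkeeping is carried out, the remainder is routine Nevanlinna-theoretic manipulation.
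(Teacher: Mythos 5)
The paper does not actually prove this lemma: it is quoted verbatim from Hayman's book, so there is no internal argument to compare against. Your three-stage plan is the standard textbook proof, and stages (a) and (c) are essentially correct. (A small imprecision in (a): from $N(r)=\int_{r_1}^r n(t)\,t^{-1}dt\le n(r)\log(r/r_1)$ the error term is $n(r)\cdot O(1)$, not $O(1)$; this is harmless after taking $\log^+$ and dividing by $\log r$, but you should not write it as an additive $O(1)$.)

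There is, however, a genuine logical flaw in the lower-bound half of stage (b). You choose $r_k\to\infty$ with $n(r_k)\ge r_k^{\tau}$ and conclude that $\sum_{j\le n(r_k)}r_j^{-\tau}\ge n(r_k)r_k^{-\tau}\ge 1$ for every $k$ ``forces divergence.'' It does not: a convergent series of positive terms can perfectly well have all partial sums bounded below by $1$ (e.g.\ $\sum 2^{-n}$), so partial sums $\ge 1$ infinitely often is consistent with convergence. The step fails as written. The repair is short and uses exactly the inequality you already have: either (i) argue by contraposition --- if $\sum r_n^{-\tau}=S<\infty$ then $n(r)r^{-\tau}\le\sum_{j\le n(r)}r_j^{-\tau}\le S$, so $n(r)\le Sr^{\tau}$ and hence $\Lambda\le\tau$, which gives $\lambda(f)\ge\Lambda$ at once; or (ii) if you insist on exhibiting divergence directly, pick $\tau<\tau''<\Lambda$ and $r_k$ with $n(r_k)\ge r_k^{\tau''}$, so that $\sum_{j\le n(r_k)}r_j^{-\tau}\ge r_k^{\tau''-\tau}\to\infty$, making the partial sums unbounded. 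With either fix the rest of your argument (the integration by parts for $\lambda(f)\le\Lambda$, and the First Main Theorem step for $\Lambda\le\sigma(f)$) goes through.
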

By Theorem \ref{T:2} and Lemma \ref{L:5}, we can get the following result immediately.
\begin{theorem}
Let ${}_{1}F_{1}(\alpha;\gamma;z)$ be a confluent hypergeometric function,
where $\gamma\neq 0,-1, -2, \cdots$. If $\alpha\not\in \mathbb{Z}_{\leq 0}$ and $\gamma-\alpha\not\in \mathbb{Z}_{\leq 0}$, then $\forall \epsilon >0$,
$n\left(r,\frac{1}{f}\right)=n(r)=O(r^{1+\epsilon})$ when $r$ is sufficiently large.
\end{theorem}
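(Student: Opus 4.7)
The plan is direct: combine Theorem \ref{T:2} with Lemma \ref{L:5} to transfer the growth-order bound on $f := {}_{1}F_{1}(\alpha;\gamma;z)$ into a counting-function bound on its zeros. First I would observe that $f$ is a transcendental entire function: under the hypothesis $\alpha\not\in\mathbb{Z}_{\leq 0}$ the defining power series does not terminate, and in any case Kummer's transformation $f(z)=e^{z}\,{}_{1}F_{1}(\gamma-\alpha;\gamma;-z)$ shows that $f$ is never a polynomial. In particular $1/f$ is meromorphic with poles precisely at the zeros of $f$, so $n(r,1/f)$ and $n(r)$ differ only by the additive constant $n(0,1/f)$, which is immaterial for an $O(r^{1+\epsilon})$ estimate; this accounts for the first equality in the statement.

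Next, Theorem \ref{T:2} provides $\sigma(f)=1$ under the assumption $\alpha\not\in\mathbb{Z}_{\leq 0}$, and Lemma \ref{L:5} yields
\[
\lambda(f)\;=\;\limsup_{r\to\infty}\frac{\log^{+}n(r)}{\log r}\;\leq\;\sigma(f)\;=\;1.
\]
Unpacking the definition of $\limsup$: for every $\epsilon>0$ there exists $r_{0}=r_{0}(\epsilon)$ such that $\log^{+}n(r)<(1+\epsilon)\log r$ for all $r>r_{0}$, which is exactly the required bound $n(r)=O(r^{1+\epsilon})$. The role of the second hypothesis $\gamma-\alpha\not\in\mathbb{Z}_{\leq 0}$ is only to guarantee (again via Kummer's transformation, since otherwise $f$ would equal $e^{z}$ times a polynomial) that $f$ actually possesses infinitely many zeros; without it the conclusion would be vacuously true with $n(r)=O(1)$.

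There is no real obstacle here: the theorem is essentially a repackaging of Theorem \ref{T:2} together with the classical exponent-of-convergence inequality $\lambda(f)\leq\sigma(f)$. The only routine bookkeeping is reconciling the two counting functions $n(r,1/f)$ and $n(r)$, and recording a one-line verification that $f$ is transcendental so that Lemma \ref{L:5} applies. I would present the argument in four short steps (transcendence, counting-function reconciliation, invocation of Theorem \ref{T:2} and Lemma \ref{L:5}, extraction of the $O(r^{1+\epsilon})$ estimate from the $\limsup$) and make no attempt to lower the exponent to $1$, since Lemma \ref{L:5} only offers the inequality $\lambda(f)\leq\sigma(f)$ and the paper does not establish the stronger equality in this setting.
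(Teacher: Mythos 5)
Your proposal is correct and follows exactly the paper's route: the paper derives this theorem ``immediately'' from Theorem \ref{T:2} ($\sigma({}_{1}F_{1})=1$) together with Lemma \ref{L:5} ($\lambda(f)\leq\sigma(f)$), which is precisely your argument, with your added bookkeeping (transcendence, and $n(r,1/f)=n(r)$ since ${}_{1}F_{1}(\alpha;\gamma;0)=1\neq 0$) being routine and correct.
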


\medskip
\section{Uniqueness of ${}_{1}F_{1}(\alpha;\gamma;z)$}
In this section, as applications of Theorem \ref{T:1} and Theorem \ref{T:2}, we shall provide some uniqueness theorems for confluent hypergeometric functions.

Two meromorphic functions $f$ and $g$ are said to share a value $a\in\mathbb{\hat{C}}= \mathbb{C}\bigcup\{\infty\}$ CM (counting multiplicities) if $E_f(a)=E_g(a)$. Here, $E_f(a):=\{z\in\mathbb{C}:f(z)-a=0\}$ denotes the preimage of $a$ under $f$, where a zero of $f-a$ with multiplicity $m$ counts $m$ times in $E_f(a)$. Moreover, $f$ and $g$ are said to share a value $a$ IM (ignoring multiplicities) if $\overline{E}_f(a)=\overline{E}_g(a)$. Here $\overline{E}_f(a)$ denotes the set of the distinct elements in $E_f(a)$, which is called the simplified preimage of $a$ under $f$.
In terms of sharing values, two nonconstant meromorphic functions in $\mathbb{C}$ must be identically equal if they share five values IM, and one must be a M\"{o}bius transform of the other if they share four values CM, the numbers ¡°five¡± and ¡°four¡± are the best possible, as shown by Nevanlinna (see \cite{Hayman}).

\begin{theorem}
\label{T:4}
Let $f(z)$ be an entire function with $f(0)= 1$, and let $g(z):={}_{1}F_{1}(\alpha;\gamma;z)$ be a confluent hypergeometric function. If $f(z)$ and $g(z)$ share $0$ $CM$, and there exists a finite nonzero complex number $a$ such that $\overline{E}_{f}(a)\subseteq \overline{E}_{g}(a)$, and $f'(0)=g'(0)$, then $f(z)\equiv g(z)$.

\end{theorem}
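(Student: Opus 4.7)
The plan is to factor $f$ in terms of $g$ and show that the multiplicative discrepancy is trivial. Since $f$ and $g$ are entire and share the value $0$ counting multiplicities, the quotient $f/g$ extends to an entire function with no zeros; because $f(0) = g(0) = 1$ is nonzero, we may write $f(z) = g(z)\, e^{h(z)}$ for some entire function $h$ normalized by $h(0) = 0$. Differentiating this identity at the origin gives $f'(0) = g'(0) + h'(0)$ (using $g(0) = 1$ and $h(0) = 0$), so the hypothesis $f'(0) = g'(0)$ forces $h'(0) = 0$ as well. The goal is to show $h \equiv 0$.

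Next I would bound the growth of $f$ by applying Nevanlinna's Second Main Theorem to $f$ with the three target values $0$, $a$, $\infty$:
\begin{equation*}
T(r,f) \leq \overline{N}(r, 1/f) + \overline{N}(r, 1/(f-a)) + \overline{N}(r,f) + S(r,f).
\end{equation*}
Since $f$ is entire, the pole term vanishes. The shared-zero condition CM gives $\overline{N}(r, 1/f) = \overline{N}(r, 1/g) \leq T(r,g) + O(1)$, and the set-theoretic inclusion $\overline{E}_{f}(a) \subseteq \overline{E}_{g}(a)$ gives $\overline{N}(r, 1/(f-a)) \leq \overline{N}(r, 1/(g-a)) \leq T(r,g) + O(1)$ by the First Main Theorem. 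Thus $T(r,f) \leq 2\, T(r,g) + S(r,f) + O(1)$, and because $S(r,f) = o(T(r,f))$ outside an exceptional set of finite linear measure, this rearranges to $T(r,f) = O(T(r,g))$. Theorem \ref{T:1} then pins this down to $T(r,f) = O(r)$ (or $T(r,f) = O(\log r)$ when $\alpha \in \mathbb{Z}_{\leq 0}$).

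To close the argument, I would transfer the bound to $e^{h}$ via $T(r, e^{h}) = T(r, f/g) \leq T(r,f) + T(r,g) + O(1) = O(r)$, so $\sigma(e^{h}) \leq 1$. Lemma \ref{L:4} then forbids $h$ from being transcendental and restricts it to a polynomial of degree at most $1$; writing $h(z) = A z + B$, the normalizations $B = h(0) = 0$ and $A = h'(0) = 0$ immediately yield $h \equiv 0$, i.e., $f \equiv g$.

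The main obstacle is the Second Main Theorem step: turning the inequality $T(r,f) \leq 2\, T(r,g) + S(r,f) + O(1)$ into an unconditional growth bound $T(r,f) = O(r)$ requires handling the self-referential error $S(r,f) = O(\log(r T(r,f)))$ together with its exceptional set of finite linear measure. A standard bootstrap argument exploiting monotonicity of $T(\cdot, f)$ and the freedom to shift $r$ slightly off the exceptional set is needed to confirm that $T(r,f)$ cannot outgrow $T(r,g)$ by more than a constant factor; once this is secured, the route from $\sigma(e^{h}) \leq 1$ to $h \equiv 0$ is purely algebraic.
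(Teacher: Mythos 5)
Your proposal is correct and follows essentially the same route as the paper's proof: apply the Second Main Theorem with the values $0$ and $a$ to get $T(r,f)\leq 2T(r,g)+S(r,f)$ and hence $\sigma(f)\leq\sigma(g)=1$, factor $f/g=e^{Az+B}$ via the shared zeros and Lemma \ref{L:4}, and then use $f(0)=g(0)=1$ and $f'(0)=g'(0)$ to force $B\in 2\pi i\mathbb{Z}$ and $A=0$. Your added care about the self-referential error term $S(r,f)$ and the explicit transfer of the growth bound to $e^{h}$ only makes explicit steps the paper leaves implicit.
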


\begin{proof}
By Nevanlinna's Second Main Theorem and Theorem $3.3$, we have
\begin{equation}
\begin{split}
&T(r,f)\leq \overline{N}\left(r,\frac{1}{f}\right)+\overline{N}\left(r,\frac{1}{f-a}\right)+S(r,f)\\
&\leq \overline{N}\left(r,\frac{1}{g}\right)+\overline{N}\left(r,\frac{1}{g-a}\right)+S(r,f)\\
&\leq 2T(r,g)+S(r,f),
\end{split}
\end{equation}
which implies that $\sigma(f)\leq \sigma(g)=1$.

Since $f$ and $g$ share $0$ $CM$ and both are entire, by Lemma \ref{L:4}, we have
\begin{equation}
\frac{f(z)}{g(z)}=e^{Az+B},
\end{equation}
i.e., $f(z)=e^{Az+B}\cdot g(z)$, where $A$ and $B$ are constants.

Note that $f(0)=g(0)=F(\alpha;\gamma;0)=1$, we can get $B=2k\pi i$, $k\in\mathbb{Z}$.
So $f(z)=e^{Az}\cdot g(z)$.
Taking derivative on both sides, we obtain that
\begin{equation}
f'(z)=Ae^{Az}\cdot g(z)+e^{Az}\cdot g'(z).
\end{equation}
Thus, $f'(0)=A\cdot g(0)+g'(0)=A+g'(0)$. By $f'(0)=g'(0)$, we can deduce $A=0$.
Hence, $f(z)\equiv g(z)$.

\end{proof}

The following example shows that the condition '$f'(0)=g'(0)$' in Theorem \ref{T:4} is necessary.

\begin{example}
Let $f(z)=e^{-z}$, $g(z):={}_{1}F_{1}(\alpha;\gamma;z)$ be a confluent hypergeometric function with $\alpha=\gamma$, then $g(z)=e^z$. It is easy to see that $f(z)$ and $g(z)$ share $0$ $CM$, $f(0)=1 $ and
$\overline{E}_{f}(-1)\subseteq \overline{E}_{g}(-1)$, but $f(z)\not\equiv g(z)$.
\end{example}

Next examples given illustrate that the condition '$f(0)=1$' in Theorem \ref{T:4} is needful.

\begin{example}
Let $f(z)=2e^{\frac{z}{2}}$, $g(z):={}_{1}F_{1}(\alpha;\gamma;z)$ be a confluent hypergeometric function with $\alpha=\gamma$, i.e., $g(z)=e^z$. Then $f(z)$ and $g(z)$ share $0$ $CM$, $\overline{E}_{f}(4)\subseteq \overline{E}_{g}(4)$ and $f'(0)=g'(0)$, but $f(z)\not\equiv g(z)$.
\end{example}

\begin{example}
Let $f(z)=-e^{-z}$, $g(z):={}_{1}F_{1}(\alpha;\gamma;z)$ be a confluent hypergeometric function with $\alpha=\gamma$, i.e., $g(z)=e^z$. Then $f(z)$ and $g(z)$ share $0$ $CM$ and $f'(0)=g'(0)$. Furthermore, we have $E_{f}(i)= E_{g}(i)$, but $f(z)\not\equiv g(z)$.
\end{example}

We proceed to showing the following uniqueness result according to the roots of the equation $f'(z)=a$ additionally.
\begin{theorem}
\label{T:5}
Let $f(z)$ be an entire function with $f(0)= 1$, and let $g(z):={}_{1}F_{1}(\alpha;\gamma;z)$ be a confluent hypergeometric function. If $f(z)$ and $g(z)$ share $0$ $CM$, and  $\overline{E}_{f'}(\frac{\alpha}{\gamma})\subseteq \overline{E}_{g'}(\frac{\alpha}{\gamma})$, then $f(z)\equiv g(z)$.
\end{theorem}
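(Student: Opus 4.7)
The plan is to parallel the proof of Theorem~\ref{T:4}: first bound $\sigma(f)\le 1$, then apply Lemma~\ref{L:4} in conjunction with the CM sharing of $0$ to write $f/g=e^{Az+B}$, and finally exploit the shared-value condition on the derivatives to force $A=0$.

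For the order estimate, the CM sharing gives $\overline{N}(r,1/f)=\overline{N}(r,1/g)\le T(r,g)=O(r)$ by Theorem~\ref{T:1}. I would apply Nevanlinna's Second Main Theorem to $f'$ at the three values $0,\alpha/\gamma,\infty$:
\[
T(r,f') \;\le\; \overline{N}(r,1/f') + \overline{N}\!\left(r,\tfrac{1}{f'-\alpha/\gamma}\right) + S(r,f').
\]
The middle term is dominated, via the hypothesis $\overline{E}_{f'}(\alpha/\gamma)\subseteq\overline{E}_{g'}(\alpha/\gamma)$ and Nevanlinna's First Main Theorem, by $T(r,g')+O(1)$, and the differentiation formula~(\ref{E:differentiation formula}) together with Theorem~\ref{T:1} give $T(r,g')=O(r)$. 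The term $\overline{N}(r,1/f')$ is controlled by splitting zeros of $f'$ according to whether they coincide with zeros of $f$: the former contribute at most $N(r,1/f)=O(r)$, while the latter are bounded by $T(r,f'/f)+O(1)$, which is $O(r)$ via the Logarithmic Derivative Lemma and the bound on $\overline{N}(r,1/f)$. Thus $T(r,f')=O(r)$, hence $\sigma(f)\le 1$.

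Since $f/g$ is entire and nowhere zero and $\sigma(f),\sigma(g)\le 1$, Lemma~\ref{L:4} yields $f(z)/g(z)=e^{Az+B}$ for constants $A,B\in\mathbb{C}$; the normalization $f(0)=g(0)=1$ collapses $B$ to $2k\pi i$, so $f(z)=e^{Az}g(z)$ and $f'(z)=e^{Az}\bigl(Ag(z)+g'(z)\bigr)$.

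The final step, showing $A=0$, is where I expect the main obstacle to lie. Suppose for contradiction $A\neq 0$. At every $z_0\in\overline{E}_{f'}(\alpha/\gamma)$ the hypothesis forces $g'(z_0)=\alpha/\gamma$, and substituting into the formula for $f'(z_0)$ gives the relation $Ag(z_0)=(\alpha/\gamma)(e^{-Az_0}-1)$. Therefore the entire function
\[
U(z) := Ag(z) - \tfrac{\alpha}{\gamma}\bigl(e^{-Az}-1\bigr),
\]
which has order at most $1$ and satisfies $U(0)=A\neq 0$, must vanish on all of $\overline{E}_{f'}(\alpha/\gamma)$. To close the argument, one shows this containment is incompatible with the structure of the two entire functions: either by bounding $\overline{N}(r,1/(f'-\alpha/\gamma))$ from below via the Second Main Theorem applied to $f'$ (using the order estimate) and comparing against the zero-distribution of $U$, or, in the Picard-exceptional subcase where $f'-\alpha/\gamma$ has no zeros, by writing $f'-\alpha/\gamma=e^{Q(z)}$ for a linear polynomial $Q$ and contradicting the asymptotics of ${}_1F_1(\alpha;\gamma;z)$ along the real axis via (\ref{E:asymptotic-2})-(\ref{E:asymptotic-3}).
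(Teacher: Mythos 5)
Your first two steps reach the same milestones as the paper but by a different route for the order bound: the paper does not apply the Second Main Theorem to $f'$ at three values, but instead starts from the Milloux-type identity
\[
\frac{\alpha/\gamma}{f}=\frac{f'}{f}-\frac{f'-\frac{\alpha}{\gamma}}{f''}\cdot\frac{f''}{f},
\]
which, with the Logarithmic Derivative Lemma and the First Main Theorem, gives
$T(r,f)\leq N\left(r,\frac{1}{f}\right)+\overline{N}\left(r,\frac{1}{f'-\alpha/\gamma}\right)+S(r,f)\leq 2T(r,g)+S(r,f)+S(r,g)$
directly, avoiding your detour through $\overline{N}(r,1/f')$ and the need to pass back from $T(r,f')$ to $T(r,f)$ (a passage you should at least acknowledge, since the Logarithmic Derivative Lemma for a function not yet known to have finite order only gives $O(\log(rT(r,f)))$ outside an exceptional set, and closing the loop requires the standard Borel-type argument). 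Both routes land on $\sigma(f)\leq 1$, and your reduction to $f(z)=e^{Az}g(z)$ with $B\in 2\pi i\mathbb{Z}$ is identical to the paper's. You should also split off the degenerate case $\alpha=0$ (as the paper does), since then $\alpha/\gamma=0$ and $g$ is constant.

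The genuine gap is the final step, and you have located it precisely but not filled it. Your auxiliary function $U(z)=Ag(z)-\frac{\alpha}{\gamma}\left(e^{-Az}-1\right)$ indeed vanishes on $\overline{E}_{f'}(\alpha/\gamma)$ and satisfies $U(0)=A\neq 0$, but neither of your two suggested closures is carried out, and neither obviously works: $U$ has order at most $1$ and can therefore accommodate $O(r)$ zeros, which is exactly the magnitude of $\overline{N}\left(r,\frac{1}{f'-\alpha/\gamma}\right)$ that the Second Main Theorem can force, so a counting comparison yields no contradiction; and the Picard-exceptional analysis only covers the subcase where $f'-\alpha/\gamma$ is zero-free. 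So the proposal as written does not establish $A=0$. For comparison, the paper disposes of this step in one line, writing $f'(0)=A\cdot g(0)+g'(0)=A+g'(0)$ and asserting $f'(0)=g'(0)=\frac{\alpha}{\gamma}$; but observe that $f'(0)=\frac{\alpha}{\gamma}$ means $0\in\overline{E}_{f'}(\alpha/\gamma)$, which the hypothesis $\overline{E}_{f'}(\frac{\alpha}{\gamma})\subseteq \overline{E}_{g'}(\frac{\alpha}{\gamma})$ does not supply --- the inclusion runs in the wrong direction, and only $0\in\overline{E}_{g'}(\alpha/\gamma)$ is automatic. In other words, the step you could not complete is also the step at which the paper's own argument is not airtight as stated; your honest flagging of it is to your credit, but the proof remains incomplete there.
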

\begin{proof}
We distinguish two cases to discuss as follows.

Case I. $\alpha=0$. It is obvious that $g$ is a constant function, by the assumption of the theorem, we obtain
that $f\equiv g$.

Case II. $\alpha\not=0$.

Since
\begin{equation}
\frac{\frac{\alpha}{\gamma}}{f}=\frac{f'}{f}-\frac{f'-\frac{\alpha}{\gamma}}{f''}\frac{f''}{f},
\end{equation}
then
\begin{equation}
m\left(r,\frac{1}{f}\right)\leq m\left(r,\frac{f'}{f}\right)+m\left(r,\frac{f'-\frac{\alpha}{\gamma}}{f''}\right)+m\left(r,\frac{f''}{f}\right)+O(1).
\end{equation}
By the Logarithmic Derivative Lemma and Nevanlinna's first main theorem, we have
\begin{equation}
\begin{split}
&T(r,f)-N\left(r,\frac{1}{f}\right)\\
&\leq m\left(r,\frac{f'-\frac{\alpha}{\gamma}}{f''}\right)+S(r,f)\\
&\leq m\left(r,\frac{f''}{f'-\frac{\alpha}{\gamma}}\right)+N\left(r,\frac{f''}{f'-\frac{\alpha}{\gamma}}\right)
-N\left(r,\frac{f'-\frac{\alpha}{\gamma}}
{f''}\right)+S(r,f)\\
&\leq N\left(r,\frac{f''}{f'-\frac{\alpha}{\gamma}}\right)-N\left(r,\frac{f'-\frac{\alpha}{\gamma}}{f''}\right)+S(r,f)\\
&\leq N(r,f'')+N\left(r,\frac{1}{f'-\frac{\alpha}{\gamma}}\right)-N\left(r,\frac{1}{f''}\right)-N\left(r,f'-\frac{\alpha}{\gamma}\right)
+S(r,f).
\end{split}
\end{equation}
Since $f$ is an entire function such that $f$ and $g$ share $0$ $CM$,  and  $\overline{E}_{f'}(\frac{\alpha}{\gamma})\subseteq \overline{E}_{g'}(\frac{\alpha}{\gamma})$, we obtain
\begin{equation}
\begin{split}
T(r,f)&\leq N\left(r,\frac{1}{f}\right)+N\left(r,\frac{1}{f'-\frac{\alpha}{\gamma}}\right)-N\left(r,\frac{1}{f''}\right)+S(r,f)\\
&\leq N\left(r,\frac{1}{f}\right)+N\left(r,\frac{1}{f'-\frac{\alpha}{\gamma}}\right)-
N\left(r,\frac{1}{(f'-\frac{\alpha}{\gamma})'}\right)+S(r,f)\\
&\leq N\left(r,\frac{1}{f}\right)+\overline{N}\left(r,\frac{1}{f'-\frac{\alpha}{\gamma}}\right)+S(r,f)\\
&\leq N\left(r,\frac{1}{g}\right)+\overline{N}\left(r,\frac{1}{g'-\frac{\alpha}{\gamma}}\right)+S(r,f).
\end{split}
\end{equation}
Note that $m(r, g')\leq m\left(r, \frac{g'}{g}\right)+m(r, g)$, by Theorem \ref{T:2}, we have
\begin{equation}
\begin{split}
T(r,f)&\leq m (r, g)+m(r, g')+S(r,f)\\
&\leq  2T(r,g)+S(r,g)+S(r,f),
\end{split}
\end{equation}
which implies that $\sigma(f)\leq \sigma(g)=1$.

According to the conditions of the theorem, by Lemma \ref{L:4}, we get
\begin{equation}
\frac{f(z)}{g(z)}=e^{Az+B},
\end{equation}
i.e., $f(z)=e^{Az+B}\cdot g(z)$, where $A$ and $B$ are constants.

Note that $g(0)=F(\alpha;\gamma;0)=1$ and $f(0)= 1$, then $B=2k\pi i$, $k\in\mathbb{Z}$.
It follows that $$f(z)=e^{Az}\cdot g(z).$$
Taking derivative on both sides, we obtain that
\begin{equation}
f'(z)=Ae^{Az}\cdot g(z)+e^{Az}\cdot g'(z).
\end{equation}
Thus, $f'(0)=A\cdot g(0)+g'(0)=A+g'(0)$. By $f'(0)=g'(0)=\frac{\alpha}{\gamma}$, we can deduce $A=0$.
Hence, $f(z)\equiv g(z)$.

\end{proof}

The following two examples provided show that some conditions given in Theorem \ref{T:5} are reasonable.

\begin{example}
Let $f(z)=e^{z^{2}}$, $g(z):={}_{1}F_{1}(\alpha;\gamma;z)$ be a confluent hypergeometric function with $\alpha=\gamma$, then $g(z)=e^z$. It is easy to check that $f(z)$ and $g(z)$ share $0$ $CM$ and $f(0)=1$, but $f(z)\not\equiv g(z)$.
So the condition '$\overline{E}_{f'}(\frac{\alpha}{\gamma})\subseteq \overline{E}_{g'}(\frac{\alpha}{\gamma})$' cannot be deleted.
\end{example}

\begin{example}
Let $f(z)=-e^{-z}$, $g(z):={}_{1}F_{1}(\alpha;\gamma;z)$ be a confluent hypergeometric function with $\alpha=\gamma$, i.e., $g(z)=e^z$. Then $f(z)$ and $g(z)$ share $0$ $CM$ and $\overline{E}_{f'}(1)\subseteq \overline{E}_{g'}(1)$. Obviously, $f(z)\not\equiv g(z)$, so the condition '$f(0)=1$' is necessary.
\end{example}

\medskip
\bibliographystyle{amsplain}

\begin{thebibliography}{10}

\bibitem{G. E. Andrews, R. Askey, R. Roy} G. E. Andrews, R. Askey and R. Roy, \textsl{Special Functions},
Cambridge University Press, 1999.

\bibitem{Bateman} H. Bateman, \textsl{Higher Transcendental Functions, Vol.1}, 1953.

\bibitem{GO} A. A. Gol'dberg and I. V. Ostrovskii, \textsl{Value Distribution of Meromorphic Functions}, translations
of Mathematical Monographs, Vol.236, American Mathematical Society, 2008.


\bibitem{Hayman} W. K. Hayman, {\sl Meromorphic Functions}, Clarendon Press, Oxford, 1964.

\bibitem {HX} Y. Z. He and X. Z. Xiao, \textsl{Algebroid Functions and Ordinary Differential Equations}, Science
Press, Beijing 1988 (Chinese).

\bibitem{Ince} E. L. Ince, \textsl{Ordinary Differential Equations}, Dover Publications, 1956.

\bibitem{G. Kristensson} G. Kristensson, \textsl{Second Order Differential Equations: Special Functions and Their
Classification}, Springer New York: New York, NY. 2010.

\bibitem{Laine} I. Laine, \textsl{Nevanlinna Theory and Complex Differential Equations}, Walter de Gruyter, Berlin
1993.

\bibitem{Lebedev} N. N. Lebedev, \textsl{Special Functions and Their Applications}, Rev. English ed. : translated and
edited by Richard A. Silverman, Dover Publications, 1972.

\bibitem{V. Meden and K. Schonhammer} V. Meden and K. Schonhammer, \textsl{Spectral functions for the
Tomonaga-Luttinger model}, Phys. Rev. B, Vol. 46, no. 24, 15753-15760.



\bibitem{P. Nath} P. Nath, \textsl{Confluent hypergeometric function}, The Indian Journal of Statistics (1933-1960),
Vol. 11, No. 2, 153-166.


\bibitem{J. Negro}  J. Negro, L. M. Nieto and O. Rosas-Ortiz,
\textsl{Confluent hypergeometric equations and related solvable potentials in Quantum Mechanics}, J. Math. Phys. 42 (2000), no.12, 7964-7996.

\bibitem{J. B. Seaborn}  J. B. Seaborn, \textsl{Hypergeometric Functions and Their Applications}, Springer-Verlag 1991.

\bibitem{Slater} L. J. Slater, \textsl{Confluent Hypergeometric Functions}, Cambridge University Press, Cambridge-New
York 1960.


\bibitem{N. Ja. Vilenkin}  N. Ja. Vilenkin, \textsl{Special Functions and the Theory of Group Representations}.
American Mathematical Society, Providence, RI.

\bibitem{Wang and Guo} Z. X. Wang and D. R. Guo, \textsl{An Introduction to Special Functions}, Peking University
Press, Beijing 2012 (Chinese), 1968.

\bibitem{Webb} H. A. Webb M.A., and J. R. Airey,  M.A.D.Sc., \textsl{VIII. The practical importance of the confluent
hypergeometric function}, The London, Edinburgh and Dublin philosophical magazine and journal of science, Vol. 36 (1918), 129-141.

\bibitem{Whittaker and Watson} E. T. Whittaker and G. N. Watson, \textsl{A Course of Modern Analysis}, Cambridge
University Press, 1950.



\end{thebibliography}

\end{document}